\numberwithin{equation}{section}
\theoremstyle{Theorem}
\newtheorem {theo}{Theorem}[section]
\newtheorem*{theo*}{Theorem}
\newtheorem {lemma}[theo]{Lemma}
\newtheorem*{lemma*}{Lemma}
\newtheorem {prop}[theo]{Proposition}
\newtheorem*{prop*}{Proposition}
\newtheorem {cor}[theo]{Corollary}
\newtheorem*{cor*}{Corollary}
\newtheorem*{cor_proof*}{Corollary (of the proof)}
\newtheorem*{conjecture*}{Conjecture}
\theoremstyle{definition}
\newtheorem {defi}[theo]{Definition}
\newtheorem*{defi*}{Definition}
\newtheorem {nota}[theo]{Notation}
\newtheorem*{nota*}{Notation}
\theoremstyle{remark}
\newtheorem {remark}[theo]{Remark}
\newtheorem*{remark*}{Remark}
\newtheorem*{warning*}{Warning}
\newtheorem*{remarques*}{Remarks}
\newtheorem*{warnings*}{Warnings}
\newtheorem*{convention*}{Convention}
\newtheorem*{exemple*}{Example}
\newtheorem*{exemples*}{Examples}
\newtheorem*{question*}{Question}
\newtheorem*{questions*}{Questions}
\newtheorem*{fact*}{Fact}
\newtheorem{claim}[theo]{Claim}
\newtheorem*{acknowledgments}{Acknowledgments}
\def\C{{\mathcal C}}
\def\CC{{\mathcal C}}
\def\R{{\mathds R}}
\def\Z{{\mathds Z}}
\def\2Z{{\fract{\Z}{2\Z}}}
\newcommand{\fract}[2]{\hbox{\leavevmode 
\kern.1em \raise .25ex \hbox{\the\scriptfont0 $#1$}\kern-.1em }\big/
  {\hbox{\kern-.15em \lower .5ex \hbox{\the\scriptfont0 $#2$}} }}
\newcommand{\subfract}[2]{\hbox{\leavevmode
  \kern0em \raise .25ex \hbox{\the\scriptfont0 \tiny $#1$}\kern-.1em }/
  {\hbox{\kern-.15em \lower .5ex \hbox{\the\scriptfont0 \tiny $#2$}} }}
\newcommand{\dessin}[2]{
  \vcenter{\hbox{\includegraphics[height=#1]{#2.pdf}}}}
\renewcommand{\quote}[1]{`#1'}
\def\nR{\textnormal R}
\newcommand{\ve}{\varepsilon}
\newcommand{\si}{\sigma}
\definecolor{pink}{rgb}{0.858, 0.188, 0.478}
\definecolor{orange}{rgb}{1, 0.647, 0}
\begin{document} 

\title{Higher order Kirk invariants of link maps} 
\author[B. Audoux]{Benjamin Audoux}
         \address{Aix Marseille Univ, CNRS, Centrale Marseille, I2M, Marseille, France}
         \email{benjamin.audoux@univ-amu.fr}
\author[J.B. Meilhan]{Jean-Baptiste Meilhan} 
\address{Univ. Grenoble Alpes, CNRS, Institut Fourier, F-38000 Grenoble, France}
	 \email{jean-baptiste.meilhan@univ-grenoble-alpes.fr}
\author[A. Yasuhara]{Akira Yasuhara} 
\address{Faculty of Commerce, Waseda University, 1-6-1 Nishi-Waseda,
  Shinjuku-ku, Tokyo 169-8050, Japan}
	 \email{yasuhara@waseda.jp}
%
%
\begin{abstract} 
We define numerical link-homotopy invariants of link maps of any number of components, which naturally generalize the Kirk invariant.
The Kirk invariant is a link-homotopy invariant of $2$-component link maps given by linking numbers of loops based at self-singularities of each component with the other spherical component; 
our invariants use instead ingredients from Milnor's higher order link invariants, and are extracted from the reduced fundamental groups of the exterior. 
We provide practical algorithms to compute these invariants from an appropriate cross-section, as well as families of examples that are therewith detected.
The main proofs use the combinatorial theory of cut-diagrams previously developed by the authors.
\end{abstract} 

\maketitle

\section{Introduction} 
A \emph{link map} is a continuous map from a disjoint union of spheres (possibly of various dimensions) to the $n$-dimensional sphere ($n\ge 0$), with pairwise disjoint images. 
The natural equivalence relation on link maps is \emph{link-homotopy}, that is homotopies through link maps --- a notion that was first introduced by Milnor in \cite{Milnor2} in the study of links in $3$-space.
The  study of link maps was initiated by Scott \cite{Scott} and Massey-Rolfsen \cite{MR}, in codimension  larger than $2$. 
Koschorke showed that, in a large metastable range, link-homotopy of link maps essentially reduces to problems on higher homotopy groups of spheres \cite{Koko}; see also e.g.  \cite{Koko2,Koko3}.

This paper is concerned with the study of link maps of $2$-dimensional spheres in $S^4$, which turns out to show rather different behaviors. 
From now on, the term \lq link map\rq\, will always implicitly refer to this codimension $2$ situation. 
The first step in this study was the work of Fenn and Rolfsen, who constructed a $2$-component link map which is not link-homotopically trivial \cite{FR}.  
Kirk defined in the late eighties a link-homotopy invariant of $2$-component link maps, as follows.
Let $f : S^2_1 \cup S^2_2\rightarrow S^4$ be a link map, which we can freely assume to have finitely many self-transverse singular points.\footnote{In what follows, we will always implicitly assume that all link maps are in general position.} 
For each double point $p$ in the double point set $P_1$ of $f(S^2_1)$, pick a simple loop $\alpha_p$ on $f(S^2_1)$ based at $p$, such that $f^{-1}(\alpha_p)$ is connected, and denote by $n_p=\left\vert \textrm{lk}\left(\alpha_p,f(S^2_2)\right)\right\vert$ the absolute value of the linking number of $\alpha_p$ with the second component. Then Kirk defines 
\begin{equation}\label{eq:kirk1}
 \sigma_1(f) := \sum_{p\in P_1} \ve(p)(t^{n_p} -1)\in \Z[t], \\[-0.15cm]
 \end{equation}
\noindent where $\ve(p)$ denotes the sign at the intersection at $p$. 
Reversing the roles of the components, we similarly define $\sigma_2(f)\in \Z[t]$. 
The \emph{Kirk invariant of $f$} is the pair $\si(f):=\left(\si_1(f),\si_2(f)\right)\in \Z[t]\oplus \Z[t]$, see \cite{Kirk}. 
Kirk showed that $\si$ is a link-homotopy invariant.  
Strikingly, this invariant is in fact a complete invariant of $2$-component link maps up to link-homotopy, as showed thirty years later by Schneiderman and Teichner  \cite{ST}. 
For link maps of any number of components, a similar invariant can  easily be defined, detecting similar pairwise linking phenomena among $2$-component sub-link maps; see e.g. \cite{Li}. 
\medskip 

The purpose of the present paper is to define numerical invariants of
link maps which can detect triple and higher linking
  phenomena. They can be seen as higher order Kirk invariants. 

This builds on a very elementary observation about Kirk's definition : taking the absolute value $n_p$ of the linking number, 
amounts to specifying a \emph{preferred orientation} on the loop $\alpha_p$, which has \emph{positive} linking number with the other link map component; 
in other words, this defines a \lq positive\rq\, element of $H_1(S^4\setminus f(S^2_2))$ representing the loop $\alpha_p$ based at $p$. 

The idea for our higher order invariants can then be roughly
summarized as follows --- complete definitions will be given in Section \ref{sec:def}. 
First, since we seek for higher order link-homotopy invariants, we will consider elements in the \emph{reduced} fundamental group of the link map exterior, rather than just its first homology group; this notion was introduced by Milnor in his seminal work on link-homotopy for links in the $3$-sphere \cite{Milnor1,Milnor2}. Next, we introduce a notion of \emph{positive} element in this reduced group, which allows us to specify a preferred orientation on any path in a link map complement. 
Finally, in order to extract numerical invariants, we consider the \emph{reduced Magnus expansion} of these positive elements; this yields polynomials in non-commuting variables, with integral coefficients, and taking these coefficients modulo a suitable indeterminacy, as a gcd of certain lower order coefficients, provides us with the desired invariants. See Theorems \ref{th:main} and \ref{th:main2} for precise statements.  
We stress that a \emph{basing} for the link map, which is
a choice of a meridian for each component, is chosen and fixed  in
this construction, but that our numerical invariants are independent
of this choice. 
 
As discussed in Section \ref{sec:covering}, the Kirk invariant has a
natural reformulation in terms of covering spaces, which extends
to higher order link map invariants of any number of components. 
This was already observed by Stirling in his recent paper \cite{stirling23}, which summarizes the work of his 2022 PhD thesis \cite{Stirling}.
This construction naturally yields invariants of \emph{based} link maps and Stirling made, in the $3$-component case, a remarkable work to determine the maximal quotient that is basing independent. As a matter of fact, his invariant stands as the best known candidate for a complete invariant of $3$-component link maps; but the techniques of \cite{stirling23} turn out to be rather intricate for higher numbers of components, see Remark \ref{rem:stirling}.
In this paper, we use instead a combinatorial approach, based on the theory of \emph{cut-diagrams} developed by the authors in \cite{AMY}. 
This will not only allow for concrete computational examples at any order, but also provide a self-contained framework for all proofs. 
Our approach in terms of cut-diagrams seems to have certain advantages over topological arguments. 
First, the construction is purely combinatorial, making the proof of the theorem rather elementary. 
Moreover,  this makes explicit computations accessible at any order: we provide concrete examples of link maps of any number of components realizing our invariant in Section \ref{sec:real}. 
We also show in Section \ref{sec:compute} that there is a surjective
map from a certain set of singular links in $S^3$ to the set of link
maps, and that higher order link-homotopy invariants of a link map can be calculated by a practical algorithm from 
a singular link which is a preimage under this map. 
 Another advantage of our approach is that it 
generalizes to \emph{surface-link maps} of any number of components, which are continuous maps from a disjoint union of surfaces to the $4$-dimensional sphere, with pairwise disjoint images; see Section \ref{sec:surfaces}.

\begin{acknowledgments}
The second author would like to thank Paul Kirk for stimulating discussions regarding \cite{Kirk} during his stay at the Institut Fourier in the summer 2022. The authors are
also grateful to Mark Powell for bringing the PhD thesis \cite{Stirling} to their knowledge during the
process of this work, and for useful discussions.  
The first, resp. second, author is partially supported by the project SyTriQ (ANR-20-CE40-0004), resp. the project AlMaRe (ANR-19-CE40-0001-01), of the ANR.
The third author is supported by the JSPS KAKENHI grant 21K03237.
\end{acknowledgments}

\section{Definition of the higher order Kirk invariants} \label{sec:def}
Let $f:S_1^2\cup\cdots \cup S_n^2\longrightarrow S^4$ be a link map, that is, a continuous map with pairwise disjoint images.
Denote by $L=K_1\cup\cdots\cup K_n=f(S_1^2)\cup\cdots\cup f(S_n^2)$ the image of $f$, and for each $i\in\{1,\cdots,n\}$ set 
\[L_i:=L\setminus K_i,\quad M_i=S^4\setminus L_i\quad \textrm{ and}\quad G(L_i)=\pi_1(M_i,x_0), \]
for some basepoint $x_0$ in the exterior of $L$.
We note that, since $f$ can freely be assumed to be an immersion with
transverse double points, the oriented image $L$ of $f$, with ordered components, may safely be identified with the link map $f$ itself; in the rest of the paper, we shall freely call $L$ a link map as well. 

\subsection{Reduced groups and positive elements}\label{sec:pos}

Here, and throughout the rest of this paper, given two group elements $x$ and $y$, we use the convention $[x,y]=x^{-1}y^{-1}xy$ and $x^y=y^{-1}xy$. 

\begin{defi} 
A  \emph{meridian} for the $i$th component of $L$, or simply \emph{$i$th meridian}, is a loop in $S^4\setminus L$, of the following form. 
Pick a point $x_i$ on the $i$th component $K_i$ of $L$, and a small disk $D_i$ intersecting $L$ transversely at $x_i$; pick also a path $\gamma_i$  running from $x_0$ to the boundary of $D_i$. Then an $i$th meridian is given by the loop $\gamma_i (\partial D_i) \gamma_i^{-1}$, oriented in such a way that it has linking number one with $K_i$.  
A \emph{basing} for $L$ is a choice of meridian for each component, which we may assume to be mutually disjoint except at $x_0$. 
\end{defi}

In what follows, we assume that a choice of basing has been made for our link map $L$. 

\begin{remark}\label{def:basing}
A basing thus specifies, for each component $K_i$ of $L$,  a point $x_i$ and a path running from $x_0$ to $x_i$, which we abusively also denote by $\gamma_i$. 
\end{remark}

The following notion was first introduced by Milnor in \cite{Milnor1}, and is thus sometimes called \emph{Milnor group}.
\begin{defi}\label{def:red} 
Given a group $G$ normally generated by $a_1,\cdots,a_m$, the \emph{reduced group} $\nR G$ is defined as the quotient of $G$ by the normal subgroup generated by commutators $[a_j,a_j^g]$ for all $j$ and all $g\in G$. 
\end{defi}

Since, for each $i$, the group $G(L_i)$ is normally generated by a choice of meridian for each component of $L_i$, we can consider the reduced group $\nR G(L_i)$. 
In fact, the following is known.
\begin{lemma}\label{lem:isor}
$\nR G(L_i)$ is isomorphic to $\nR F^i_{n-1}$, the reduced group of the free group $F^i_{n-1}$ on $n-1$ generators $\{x_1,\ldots,x_n\}\setminus \{x_i\}$.  
\end{lemma}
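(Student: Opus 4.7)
My plan is to build a natural homomorphism from $\nR F^i_{n-1}$ to $\nR G(L_i)$ and show that it is both surjective and injective, following the pattern of Milnor's original proof for complements of classical links, but exploiting the fact that in codimension two in $S^4$ the components of $L_i$ are simply connected.

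First I would define $\phi : F^i_{n-1} \to G(L_i)$ by sending each free generator $x_j$ ($j\ne i$) to the chosen meridian $m_j$ of the component $K_j$ specified by the basing. Since the $m_j$'s form a system of normal generators of $G(L_i)$, the defining relations $[x_j,x_j^g]$ of the reduced group $\nR F^i_{n-1}$ are sent to $[m_j,m_j^{\phi(g)}]$, which are trivial in $\nR G(L_i)$ by Definition \ref{def:red}. Hence $\phi$ descends to a morphism $\bar\phi : \nR F^i_{n-1} \to \nR G(L_i)$, and the whole content of the lemma is that $\bar\phi$ is an isomorphism.

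Surjectivity of $\bar\phi$ would go by the standard Milnor-group argument. A priori, any element of $G(L_i)$ is only a product of conjugates $m_{j_k}^{g_k}$ by arbitrary elements, but each conjugator $g_k$ can itself be expressed as a product of conjugates of meridians, and iterating this substitution together with the reduced relations yields, modulo commutators of strictly higher weight, a word in the fixed meridians $m_j$ alone. Since $\nR G(L_i)$ is nilpotent, this rewriting terminates and shows that the $m_j$'s actually \emph{generate} the reduced group.

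For injectivity, which is the heart of the proof, I would appeal to the combinatorial machinery of \cite{AMY}: a cut-diagram for $L_i$ yields a Wirtinger-type presentation of $G(L_i)$ whose generators are meridians indexed by the regions of each component and whose relations are conjugation identities dictated by the cuts. The crucial observation is that, upon passing to the reduced group, all these Wirtinger-type relations are already consequences of the Milnor relations we have quotiented by: a cut on $K_j$ produced by $K_j$ itself gives a relation of the form $m_j^{w_1}=(m_j^{w_1})^{m_j^{w_2}}$, i.e.\ $[m_j^{w_1},m_j^{w_2}]=1$, which is exactly a Milnor relation; a cut on $K_j$ produced by a distinct $K_k$ merely identifies two meridians of $K_j$ up to conjugation, and after using the Milnor relations to collapse all meridians of a given component into a single class, no information is lost. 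This would yield a presentation of $\nR G(L_i)$ that matches the defining presentation of $\nR F^i_{n-1}$ term by term, so $\bar\phi$ is injective.

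The main obstacle I anticipate is precisely this last step: verifying carefully that every relation produced by the cut-diagram reduces, in the Milnor quotient, either to a trivial identity or to one of the Milnor relations on some $m_j$. The combinatorics involves tracking the different types of cuts (self-cuts on a single component versus cuts between distinct components) and the conjugators attached to each of them, and the cut-diagram formalism of \cite{AMY} is designed precisely to make such an accounting systematic. An alternative topological route would be to note that, unlike in the classical case of links in $S^3$, the \lq\lq longitudes'' of the components $K_j$ — the obstruction to $\bar\phi$ being injective in Milnor's original theorem — are automatically trivial here since each $K_j$ is a $2$-sphere, hence simply connected; but the combinatorial approach seems better suited to the paper's subsequent developments.
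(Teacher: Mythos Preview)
The paper does not supply its own proof; it simply cites \cite[Thm.~5.18]{AMY} (whose presentation of the reduced group, specialized to spheres, has no extra longitude relations since $H_1(S^2)=0$) and Stirling's purely algebraic argument \cite[Prop.~7.1]{stirling23}. So there is no detailed argument in the paper to compare against, only these references.

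Your surjectivity step is the standard Milnor argument and is fine. The injectivity sketch, however, has a real gap. You write that a cut on $K_j$ produced by a distinct $K_k$ ``merely identifies two meridians of $K_j$ up to conjugation, and after using the Milnor relations to collapse all meridians of a given component into a single class, no information is lost.'' But if one literally collapses every region-meridian on $K_j$ to the single element $m_j$, the Wirtinger relation $[A]=[B]^{[C]}$ (with $A,B$ regions of $K_j$ and $C$ a region of $K_k$) becomes $m_j=m_j^{m_k}$, i.e.\ $[m_j,m_k]=1$. That is \emph{not} a Milnor relation, and imposing it for all pairs $j,k$ would abelianize the group. So this na\"ive collapse does not define a homomorphism to $\nR F^i_{n-1}$, and injectivity of $\bar\phi$ cannot be read off from it.

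What is actually required is to send each region $R$ of $K_j$ to a \emph{specific} conjugate $x_j^{w_R}\in \nR F^i_{n-1}$, where $w_R$ is read off a path on $K_j$ from the base region to $R$ (this is exactly the mechanism of Lemma~\ref{lem:redlong}); the Wirtinger relation at each cut then becomes a tautology by construction, \emph{provided} $w_R$ is independent of the chosen path. That independence is precisely where one uses that $K_j$ is a $2$-sphere, so that any two such paths are homotopic. In short, your ``alternative topological route'' via trivial longitudes is not an alternative at all: it is the indispensable input, and the combinatorial bookkeeping you outline cannot be completed without invoking it.
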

This isomorphism is specified by the basing for $L$, and maps the $j$th meridian of $L$ to $x_j$ for each $j$. 
Lemma \ref{lem:isor} seems to have been first observed by Krushkal, see Section 3.8 in \cite{Krushkal}.
A proof was also recently given in \cite{AMY} using cut-diagrams (resulting in a more general statement), and independently in \cite[Prop.~7.1]{stirling23} (see also \cite[Prop.~8.0.1]{Stirling}) in a purely algebraic way. 

Denote by $\Z\langle\langle X_1,\cdots,X_n \rangle\rangle$ the ring of formal power series in non-commuting variables $X_1,\cdots, X_n$. 
For each $i$, denote by $\Lambda_i$ the quotient ring of $\Z\langle\langle X_1,\cdots,X_n \rangle\rangle$ by the ideal generated by monomials containing $X_i$ and monomials containing twice a same variable. 
We may identify $\Lambda_i$ with the set of polynomials whose terms contains at most once each variable, and does not contain the variable $X_i$. 
The \emph{reduced Magnus expansion $E_i$} is an injective group homomorphism (see e.g. \cite[Prop.~7.10]{ipipipyura}) 
$$ E_i:\, \nR F^i_{n-1} \longrightarrow \Lambda_i $$
\noindent defined by $E_i(x_j^{\pm 1})=1\pm X_j$.
Since indices correspond to components, and components are ordered, monomials in $\Lambda_i $ are endowed with a total order, inherited from the lexicographic order on the indices of the variables. In particular, we can make sense of the \lq first non vanishing\rq\, term in some reduced Magnus expansion.

The following is a key ingredient in our construction (see Remark \ref{rem:eco}). 
\begin{defi}\label{def:positive}
An element of $\nR G(L_i)$, for some $i$, is \emph{positive}, 
if it is trivial or if the first non vanishing term of its reduced Magnus expansion $E_i$ has positive coefficient. 
\end{defi}

\begin{remark}\label{rem:positive}
The notion of positivity makes implicitly use of the the isomorphism from $\nR G(L_i)$  to $\nR F^i_{n-1}$, which is specified by the chosen basing for $L$. 
We stress, however, that this notion is in fact independent of the basing, that is, a positive element of $\nR G(L_i)$ for a given basing, is positive for any other basing choice. 
This follows from basic properties of the Magnus expansion as follows. 
A basing change on the $j$th component of $L$ amounts to substituting, in the reduced free group, the $j$th generator $x_j$ by a conjugate 
$g^{-1}x_jg$ from some $g\in \nR F^i_{n-1}$. Setting $E_i(g)=1+U \in \Lambda_i $, this substitution affects
the reduced Magnus expansion $E_i(x)$, for any $x\in \nR G(L_i)$, by replacing each occurrence of $X_j$ by $X_j+X_jU-U X_j$. 
This implies in particular that the first non vanishing term of $E_i(x)$ remains unchanged, which in turn implies our claim for positive elements. Note that for any nontrivial element $h\in \nR G(L_i)$, we have that $h$ is positive if and only if $h^{-1}$ isn't. 

Positivity however depends on the ordering of the link map components. 
\end{remark}

\begin{remark}\label{rem:orient}
An unoriented nontrivial loop in the exterior of $L_i$ represents a unique positive element in $\nR G(L_i)$. 
Hence the notion of positivity induces a preferred orientation of such a loop. 
This is simply because reversing the orientation, reverses the sign of all lowest degree terms in the corresponding Magnus expansion. 
\end{remark}

We conclude this section by setting some notation that will be used throughout the rest of this paper.
\begin{nota}\label{nota:coeff}
Let $W$ be an element of  $\Lambda_i $ for some index $i$ in $\{1,\cdots,n\}$. 
For any non-repeating sequence $I=i_1\cdots i_k$ of elements of $\{1,\cdots,n\}\setminus \{i\}$,  denote by $X_I$ the monomial 
 $$ X_I := X_{i_1}\cdots X_{i_k}. $$
We denote by $\kappa_W(I)$ the coefficient of $X_I$ in $W$. 
In other words, we have 
$$ W = \sum_{I} \kappa_W(I) X_{I}\in \Lambda_i, $$
\noindent where the sum runs over all non-repeating sequences $I$ of indices in $\{1,\cdots,n\}\setminus \{i\}$. 
Furthermore, we set
$$ D_W(I):=\textrm{gcd}\left\{ \kappa_W(J)~|~\mbox{$J$ is a subsequence of $I$}~;~J\neq I \right\}. $$
\end{nota}

\subsection{Higher order link-homotopy invariants of link-maps}\label{sec:define}

For a point $p$ in the singular set $P_i$ of $L_i$, 
choose an unoriented loop $\gamma_p$ based at $x_0$ as follows. 
First, consider the path $\gamma_i$ running from $x_0$ to the point $x_i$ on $K_i$ given by the chosen basing of $L$ (here we use the notation of Remark \ref{def:basing}). Next, pick a pair of paths $\alpha_1,\alpha_2$ running from $x_i$ to $p$, forming a loop $\alpha_1 \alpha_2^{-1}$ that changes branches at $p$ and avoids all other points in $P_i$. Set $\gamma_p:=\gamma_i\cdot \alpha_1\cdot \alpha_2^{-1}\cdot \gamma_i^{-1}$. 
\begin{nota}\label{nota:gp}
We denote by $g_p$ the unique positive element of $\nR G(L_i)$ given  by the loop $\gamma_p$. 
\end{nota}

Denote by $\phi$ the isomorphism from $\nR G(L_i)$ to the reduced free group $\nR F^i_{n-1}$, 
induced by the chosen basing of $L$ (Lemma \ref{lem:isor}).

For each component $i$, we set 
 \begin{equation}\label{eq:kirky}
  S_i(L) := \sum_{p\in P_i} \ve(p)(\phi(g_p) -1)\in \mathbb{Z}\nR F^i_{n-1}, 
  \\[-0.1cm]
 \end{equation}
\noindent where $\ve(p)$ denotes the sign of $p$, and where $1$ is the trivial element in $\nR F^i_{n-1}$. 

The following will be proved in Section \ref{sec:proof_linkmaps}.
\begin{theo}\label{th:main0}
For each $i$, $S_i(L)$ is a link-homotopy invariant of the \emph{based} link map $L$. 
Moreover, if $L$ is a link-homotopically trivial (unbased) link map, then $S_i(L)=0$ for all $i$. 
\end{theo}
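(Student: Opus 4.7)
The plan is to treat the two statements separately. By transversality for $1$-parameter families of maps from surfaces to $S^4$, any link-homotopy of based link maps may be put in general position so that, away from finitely many times, it is an ambient isotopy, and at the singular times it performs either a finger move (creating a pair of self-intersection points of opposite sign on one component) or its inverse (a Whitney move). Since $S_i(L)$ is a homotopy-theoretic invariant of the pair $(L,\text{basing})$, it is manifestly unchanged under ambient isotopy, so the core of the first assertion is invariance under a single finger move; I would split this into two cases according to the component $K_j$ on which the move occurs.

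If $j\neq i$, then $P_i$ is unchanged. The move is supported in a small $4$-ball $B$, and by general position we may arrange $B$ to be disjoint from the basepoint $x_0$, the basing meridians and basing paths $\gamma_i$, as well as from every loop $\gamma_p$ with $p\in P_i$. Under this arrangement, each $\gamma_p$ represents the same class in the pre- and post-move reduced groups, both of which are canonically identified with $\nR F^i_{n-1}$ by the (unchanged) basing via Lemma~\ref{lem:isor}. Hence each $\phi(g_p)$ is preserved, and so is $S_i(L)$.

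If $j=i$, then two new double points $p_+,p_-$ appear with $\varepsilon(p_+)=-\varepsilon(p_-)$, and the crux is the identity $g_{p_+}=g_{p_-}$ in $\nR G(L_i)$, which makes the two new contributions to $S_i(L)$ cancel. To establish it, I would choose the branch-changing paths $\alpha_1^\pm,\alpha_2^\pm$ at $p_+$ and $p_-$ to run on $K_i$ along the finger, so that $\gamma_{p_+}$ and $\gamma_{p_-}$ are freely homotopic on $K_i\subset S^4\setminus L_i$ via a sliding homotopy along the finger. In $\pi_1(S^4\setminus L_i,x_0)$ this gives $g_{p_-}=w^{-1}g_{p_+}w$ for a word $w$ coming from the discrepancy between the two base-pointing paths; a careful reduction of $w$ to a product of meridians, combined with the defining relations $[x_k,x_k^g]=1$ of the reduced free group and the uniqueness of positive representatives (Remark~\ref{rem:orient}), should yield the desired equality. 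This is the main obstacle: it is precisely the type of local combinatorial computation that the cut-diagram formalism of~\cite{AMY} is designed to handle cleanly.

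For the vanishing statement, suppose $L$ is link-homotopic as an unbased link map to a trivial link map $L_0$ consisting of disjoint embedded $2$-spheres, via a homotopy $F_t$. Starting from any chosen basing of $L$, we transport the basepoint $x_0$, the auxiliary points $x_i$ and the basing paths $\gamma_i$ generically along $F_t$, obtaining a compatible basing of $L_0$ under which the homotopy is through based link maps. Since $P_i(L_0)=\emptyset$, we have $S_i(L_0)=0$, and the invariance just proved gives $S_i(L)=0$ for the initial basing of $L$; as the initial basing was arbitrary, this concludes the argument.
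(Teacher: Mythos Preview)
Your direct topological approach differs from the paper's, which works entirely in the combinatorics of cut-diagrams: invariance of $S_i$ is established there (Theorem~\ref{thm:cutmain0}) by checking a finite list of local moves on cut-diagrams (the topological moves T1--T7 coming from Roseman's theorem, plus the self-singular moves S1--S4), and the vanishing statement is Lemma~\ref{lem:fini}, proved via the observation (Remark~\ref{rem:basemoi}) that a basing change only conjugates $S_i$.

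Two points in your argument need attention. First, the decomposition of a generic link-homotopy into isotopies and finger/Whitney moves is incomplete: one must also allow \emph{cusp homotopies}, at which the map momentarily fails to be an immersion and a \emph{single} double point is born or dies. This is exactly the paper's move S4; the extra double point $p$ it creates has $g_p=1$ (the two sheets coincide locally, so $\alpha_1,\alpha_2$ may be chosen to agree outside a small ball disjoint from $L_i$), hence contributes nothing to $S_i$. Second, your treatment of the finger move on $K_i$ is overcomplicated and, as stated, does not close: from a free homotopy you obtain only a conjugacy $g_{p_-}=w^{-1}g_{p_+}w$, and the relations $[x_k,x_k^g]=1$ in $\nR F^i_{n-1}$ give no reason for such a $w$ (a loop on $K_i$, hence a possibly nontrivial word in the meridians of $L_i$) to act trivially. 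The fix is simpler than you suggest. With paths running along the finger, the \emph{based} loops $\gamma_{p_+}$ and $\gamma_{p_-}$ differ only by a small loop lying entirely in the supporting $4$-ball $B$ of the move; since $B\cap L_i=\emptyset$ this loop is null-homotopic in $M_i$, so $g_{p_+}=g_{p_-}$ in $\pi_1(M_i,x_0)$ on the nose, and the two new terms cancel. This is precisely the ``parallel paths'' argument the paper uses for move S3. Your vanishing argument by transporting the basing along the homotopy is essentially the paper's, though the paper instead allows the basing to drift during the moves and then corrects at the end using the conjugation property.
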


These invariants naturally generalize the Kirk invariant. 
Indeed, when $n=2$, the reduced group $\nR G(L_i)=\nR G(L\setminus K_i)$ is isomorphic to $H_1(M_i)\cong \mathbb{Z}=\langle t\rangle$. Hence for $\{i,j\}=\{1,2\}$, 
an element $g_p\in \nR G(L_i)$, for some singular point $p$ of $K_i$, is a positive element if and only if it represents a loop based at $p$ having positive linking number with $L_j$.  
Note moreover that in the case $n=2$, the sum $S_i(L)$ is independent of the choice of basing, since $\nR F^1_1$ is abelian, a fact that no longer holds for $n\ge 3$. 
\medskip

In the next two subsections, we extract from (\ref{eq:kirky}) link-homotopy invariants of link maps, that do not depend on the basing.

\subsubsection{The numerical link-homotopy invariants $\tilde{\kappa}(I)$}\label{sec:kappa}

As hinted in Remark \ref{rem:positive}, changing the basing essentially amounts, in our construction, to replacing some generator of the reduced free group with a conjugate of it. This is reminiscent of the $3$-dimensional situation, where a similar phenomenon is handled in the definition of Milnor link invariants \cite{Milnor2}. We thus propose in this subsection an adaptation of Milnor's work to the present setting, that yields numerical invariants which are only well-defined modulo an indeterminacy arising from lower order invariants.

In order to extract numerical invariants from (\ref{eq:kirky}), that are independent of the chosen basing, we consider the reduced Magnus expansion $E_i(L) := E_i\left( S_i(L) \right)$ of this equation. (Note that $E_i$ extends naturally to the group ring $\mathbb{Z} \nR F^i_{n-1}$.)
This yields a finite sum (using Notation \ref{nota:coeff})
\begin{equation}\label{eq:Ekirky}
  E_i(L) := E_i\left( S_i(L) \right) = \sum_I \kappa_{E_i(L)}(I) X_I\in \Lambda_i, 
 \end{equation} 
\noindent where the sum ranges over all non-repeated sequences $I$ in $\{1,\ldots,n\}\setminus\{i\}$, for some integer coefficients $\kappa_{E_i(L)}(I)$. 
For any such sequence $I$, recall from Notation \ref{nota:coeff} that we set 
 \[ D_{E_i(L)}(I):=\gcd\left\{\kappa_{E_i(L)}(I)~|~J\mbox{ is a subsequence of }I~;~J\neq I\right\}.\]

Then we have the following theorem. 
\begin{theo}\label{th:main}
For each $i$, and any non-repeated sequence $I$ in $\{1,\ldots,n\}\setminus\{i\}$, the residue class $\widetilde{\kappa}_L(I;i)$ of $\kappa_{E_i(L)}(I)$ modulo $D_{E_i(L)}(I)$, 
is a link-homotopy invariant for $L$. 
In particular, $\widetilde{\kappa}_L(I;i)$ is independent of the basing.
\end{theo}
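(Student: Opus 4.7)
The strategy splits into two claims: link-homotopy invariance for a fixed basing, and independence of the basing modulo $D_{E_i(L)}(I)$. The first claim is immediate from Theorem \ref{th:main0}: the element $S_i(L)\in\Z\nR F^i_{n-1}$ is a link-homotopy invariant of the based link map, and since the reduced Magnus expansion $E_i$ extends linearly to a ring homomorphism $\Z\nR F^i_{n-1}\to\Lambda_i$, each coefficient $\kappa_{E_i(L)}(I)$ of $E_i(L) = E_i(S_i(L))$ is likewise a link-homotopy invariant of the based link map. The substantive content of the theorem is therefore the basing-independence of the residue class $\widetilde{\kappa}_L(I;i)$.

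To handle basing-independence, I would rely on the explicit description given in Remark \ref{rem:positive}: a basing change on the $j$-th component replaces the generator $x_j$ by a conjugate $g^{-1}x_jg$ in $\nR F^i_{n-1}$, and, setting $E_i(g) = 1+U$ with $U\in\Lambda_i$ of positive degree, the induced operation on reduced Magnus expansions is the substitution $X_j\mapsto X_j + X_jU - UX_j$. The key observation is that this substitution is degree-increasing: applied to a monomial of length $k$, it yields that same monomial (by selecting the $X_j$ summand in every factor) plus a sum of monomials of length strictly greater than $k$. The coefficient of a fixed $X_I$ in the transformed polynomial therefore splits as the $J=I$ contribution, which returns $\kappa_{E_i(L)}(I)$ unchanged, plus contributions indexed by monomials $X_J$ with $J$ a proper subsequence of $I$; each such contribution is an integer multiple of $\kappa_{E_i(L)}(J)$, the integer being determined by the coefficients of $U$ for the letters in $I\setminus J$ and their positions of insertion (subject to the $\Lambda_i$-relation that each variable appears at most once, which kills some terms). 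Consequently, the new value of $\kappa_{E_i(L)}(I)$ differs from the old by an element of the ideal $D_{E_i(L)}(I)\,\Z$; since any basing change is a composition of such elementary ones, the same conclusion persists.

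It remains to check that $D_{E_i(L)}(I)$ is itself basing-invariant, in order that the residue class be well-defined. The very same analysis, applied to each proper subsequence $J\subsetneq I$, shows that $\kappa_{E_i(L)}(J)$ changes by an integer combination of $\kappa_{E_i(L)}(J')$ with $J'\subsetneq J$; all such $J'$ also lie in the gcd set defining $D_{E_i(L)}(I)$, and the elementary identity $\gcd(a,b)=\gcd(a+kb,b)$ then implies that $D_{E_i(L)}(I)$ is unchanged. I expect the main delicate point to lie in the coefficient bookkeeping of the middle paragraph: one must verify with care that every contribution from a proper-subsequence monomial indeed factors as an integer multiple of $\kappa_{E_i(L)}(J)$, taking into account both the non-commutativity of the variables and the cancellations forced by the $\Lambda_i$-relations.
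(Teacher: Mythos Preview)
Your proposal handles only one of the two ways a basing change affects $S_i(L)$, and misses the other. A basing for $L$ consists of a meridian for \emph{each} component, including the $i$th one. Changing the $j$th meridian for $j\neq i$ indeed modifies the isomorphism $\phi:\nR G(L_i)\to\nR F^i_{n-1}$ by the substitution $x_j\mapsto g^{-1}x_jg$, and your argument for this case is essentially the paper's Claim~\ref{claim2}. But changing the $i$th meridian has a different effect: it does not touch $\phi$ (there is no generator $x_i$ in $\nR F^i_{n-1}$), it instead changes the path $\gamma_i$ used in the construction of the loop $\gamma_p=\gamma_i\alpha_1\alpha_2^{-1}\gamma_i^{-1}$. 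This replaces each $g_p$ by a conjugate $w\,g_p\,w^{-1}$ for a single $w\in\nR G(L_i)$ independent of $p$, and hence replaces $S_i(L)$ by $w\,S_i(L)\,w^{-1}$ in $\Z\nR F^i_{n-1}$. Remark~\ref{rem:positive}, which you cite, does not cover this case.

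The missing argument is not hard and parallels your substitution analysis: writing $E_i(w)=1+U$, one has $E_i(w)\,W\,E_i(w^{-1})=W+UW-WU$, so the coefficient of $X_I$ changes by $\sum_{I=I_1I_2}\big(\kappa_U(I_1)\kappa_W(I_2)-\kappa_W(I_1)\kappa_U(I_2)\big)$, which lies in $D_W(I)\,\Z$. This is the paper's Claim~\ref{claim0}. Once you add this conjugation case, your proof is complete and matches the paper's approach.
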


The proof of Theorem \ref{th:main}  is postponed to Section \ref{sec:proof_linkmaps}. 
Examples of computations will be given in Section \ref{sec:ex}. 

In practice, Theorem \ref{th:main} yields a family of numerical invariants, that are globally extracted from each component by considering the sum (\ref{eq:kirky}). 
It is natural to expect that link-homotopy invariants could be extracted from each positive element of the reduced group contributing to this sum.
This is the idea of the construction given in the next subsection.

\subsubsection{The link-homotopy invariants $\mathcal{K}_{i}$ and $\mathcal{K}(I;i)$}\label{sec:refine}

By simply grouping terms of same nature in Equation (\ref{eq:kirky}), it formally rewrites as 
\begin{equation}\label{eq:kirky2}
 S_i(L) = \sum_{g\in \nR G(L_i)^+} \rho_L(g)(\phi(g) -1)\in \mathbb{Z}\nR F^i_{n-1},
\end{equation}
for some integer coefficients $\rho_L(g)$, 
where  $\nR G(L_i)^+$ is the set  of positive elements in $\nR G(L_i)$. 
In other words, for any $g\in \nR G(L_i)^+$, the integer $\rho_L(g)$ is the sum of signs of all singular points $p\in P_i$ such that the associated loop $\gamma_p$ represents $g$. 
We note that these coefficients $\rho_L(g)$ have a very natural topological interpretation in terms of covering spaces, see Remark \ref{rem:rototo} below.

Since the reduced Magnus expansion $E_i$ is injective, 
$S_i(L)$ naturally corresponds to the multiset 
$ \left\{ \left(\rho_L(g),E_i\left(\phi(g)\right)\right)\, \vert \, \textrm{$g\in \nR G(L_i)^+$ positive element; $\rho_L(g)\neq 0$} \right\}$. 

One can use a similar method as in Section \ref{sec:kappa} to derive invariants that do not depend on the choice of basing, as follows.
Pick any  positive element $g$ in $\nR G(L_i)^+$ such that $\rho_L(g)\neq 0$. 
Taking the reduced Magnus expansion of $\phi(g)$, produces a sum 
\begin{equation}\label{eq:Ekirky_g}
  E_i(\phi(g)) = 1 + \sum_{I}   \kappa_{E_i(\phi(g))(I)} X_I\in \Lambda_i, 
\end{equation} 
\noindent where the sum ranges again over all non-repeated sequences $I$ in $\{1,\ldots,n\}\setminus\{i\}$. 
For any such sequence $I$, we denote by $\widetilde{\kappa}_L(I;g)$  the residue class of $\kappa_{E_i(\phi(g))}(I)$ modulo $D_{E_i(\phi(g))}(I)$  
(see Notation \ref{nota:coeff}); we stress that these residue classes are indexed by a sequence and the given positive element $g$. 
Consider the multiset 
 \[ \mathcal{K}_{i}(L) = \left\{ \left(\rho_L(g), \small{\sum}_{\large{I}} \widetilde{\kappa}_L(I;g)X_I \right)\, \vert \, \textrm{$g\in \nR G(L_i)^+$ ; $\rho_L(g)\neq 0$} \right\}, \]
where, as above, the sums are over all non-repeating sequences $I$ of
indices in $ \{1,\cdots,n\}\setminus \{i\}$. We stress that $\mathcal{K}_{i}(L)$ is not a set in general since, 
if two different positive elements $g$ and $g'$ give the same pair
$\left(\rho_L(g), \Sigma_I \widetilde{\kappa}_L(I;g) X_I\right)$, then each contributes to an element in 
$\mathcal{K}_{i}(L)$.  
We have the following.
\begin{theo}\label{th:main2}
  For each $i$, 
  $ \mathcal{K}_{i}(L)$ is a link-homotopy invariant for $L$. In particular, it is independent of the basing.
\end{theo}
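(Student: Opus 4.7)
The proof splits naturally into two parts: link-homotopy invariance with the basing held fixed, and basing independence. The former follows quickly from Theorem \ref{th:main0}, while the latter requires a Milnor-style analysis of how Magnus coefficients transform under conjugation of generators.

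For link-homotopy invariance with a fixed basing, recall from Theorem \ref{th:main0} that $S_i(L) \in \mathbb{Z} \nR F^i_{n-1}$ is a link-homotopy invariant of the based link map $L$. Since the elements of $\nR F^i_{n-1}$ form a $\mathbb{Z}$-basis of its integral group ring, the decomposition (\ref{eq:kirky2}) uniquely recovers the function $\rho_L : \nR G(L_i)^+ \to \mathbb{Z}$ from $S_i(L)$, and hence also the multiset of nonzero pairs $(\rho_L(g), \phi(g))$. The injective Magnus expansion $E_i$ then yields the equivalent multiset of pairs $(\rho_L(g), E_i(\phi(g)))$, from which both the Magnus coefficients $\kappa_{E_i(\phi(g))}(I)$ and the gcds $D_{E_i(\phi(g))}(I)$, and therefore the residue classes $\widetilde{\kappa}_L(I;g)$, are read off. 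Thus $\mathcal{K}_i(L)$ is invariant under link-homotopy of the based link map.

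For basing independence, I would decompose any basing change into elementary moves affecting a single component at a time. A change on the $j$th component with $j \neq i$ leaves each loop $\gamma_p$ unchanged but, by Remark \ref{rem:positive}, substitutes $x_j$ by a conjugate $w_j^{-1} x_j w_j$ in $\nR F^i_{n-1}$; on the Magnus level this replaces $X_j$ by $X_j + X_j U - U X_j$ in $\Lambda_i$, where $U = E_i(w_j) - 1$ has zero constant term. By induction on the length of $I$, this substitution changes $\kappa_{E_i(\phi(g))}(I)$ only by an integer linear combination of coefficients $\kappa_{E_i(\phi(g))}(J)$ indexed by proper subsequences $J \subsetneq I$; the same argument applied at lower orders shows that the gcd $D_{E_i(\phi(g))}(I)$ is itself preserved modulo the same indeterminacies, so $\widetilde{\kappa}_L(I;g)$ is unchanged. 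A change on the $i$th component, by contrast, conjugates every $g_p$ by a fixed element $\beta \in \nR G(L_i)$, without affecting $\phi$ (whose definition involves only meridians of components $j \neq i$); this permutes $\nR G(L_i)^+$ via $g \mapsto \beta^{-1} g \beta$ (which preserves positivity by the argument in Remark \ref{rem:positive}), replaces each $\phi(h)$ by its conjugate $\phi(\beta)^{-1} \phi(h) \phi(\beta)$, and leaves all residue classes intact by the same Magnus-level computation. In either case the multiset $\mathcal{K}_i(L)$ is preserved.

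The main obstacle is the Magnus-expansion computation underlying the basing-independence step: the verification that substitution $X_j \mapsto X_j + X_j U - U X_j$ (and more generally conjugation by an arbitrary element of $\nR F^i_{n-1}$) modifies each coefficient $\kappa(I)$ only by an integer combination of coefficients $\kappa(J)$ indexed by proper subsequences $J \subsetneq I$. This is a reduced-group analogue of Milnor's original indeterminacy analysis for $\overline{\mu}$-invariants \cite{Milnor2}, and should simplify in our setting thanks to the vanishing in $\Lambda_i$ of all monomials containing a repeated variable, making the induction on $|I|$ relatively clean.
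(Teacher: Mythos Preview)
Your proposal is correct and follows essentially the same approach as the paper: both reduce link-homotopy invariance to the based invariance of $S_i(L)$ (Theorem \ref{th:main0}), then establish basing independence by separately analyzing a change on the $i$th component (conjugation of each $g_p$, handled via the computation $E_i(w)WE_i(w^{-1})=W+UW-WU$) and a change on the $j$th component for $j\neq i$ (substitution $X_j\mapsto X_j+X_jU-UX_j$, handled by the Milnor-style induction you outline). The only structural difference is that the paper routes the argument through the cut-diagram framework (Theorem \ref{thm:cutmain}) before specializing to link maps, whereas you argue directly at the topological level; the Magnus-expansion claims you identify as the crux are exactly Claims \ref{claim0} and \ref{claim2} there.
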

The proof of Theorem \ref{th:main2} will be given in Section \ref{sec:proof_linkmaps}. 
As explained in Remark \ref{rem:cor}, this proof actually provides numerical link-homotopy invariants of link maps, as follows.
\begin{cor}\label{th:main3}
 For each $i$, and for any non-repeated sequence $I$ in $\{1,\ldots,n\}\setminus\{i\}$,  the multiset 
 $$ \mathcal{K}_L(I;i) = \left\{ \left(\rho_L(g), \widetilde{\kappa}_L(I;g) \right)\, \vert \, \textrm{$g\in \nR G(L_i)^+$ ; $\rho_L(g)\neq 0$} \right\} $$
 is a link-homotopy invariant for $L$.
 \end{cor}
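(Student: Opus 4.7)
The plan is to deduce Corollary \ref{th:main3} directly from Theorem \ref{th:main2} by extracting, for each monomial $X_I$, its coefficient residue class out of the formal polynomial appearing as second coordinate in the pairs of the multiset $\mathcal{K}_{i}(L)$.

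Fix an index $i$ and a non-repeated sequence $I$ in $\{1,\ldots,n\}\setminus\{i\}$. I would define the coefficient-extraction map
\[ \pi_I \colon \bigl(\rho,\, {\textstyle\sum_J}\, \widetilde{\kappa}_L(J;g)\, X_J\bigr) \longmapsto \bigl(\rho,\, \widetilde{\kappa}_L(I;g)\bigr), \]
and observe that it is well-defined on the elements of $\mathcal{K}_{i}(L)$. Indeed, the $X_I$-coefficient is canonically read off the polynomial, and its ambient quotient group $\Z/D_{E_i(\phi(g))}(I)\Z$ is itself determined by the coefficients indexed by proper subsequences of $I$, all of which are recorded in the polynomial itself.

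Applying $\pi_I$ element-wise to $\mathcal{K}_{i}(L)$, while preserving the multiplicities indexed by positive elements $g \in \nR G(L_i)^+$ with $\rho_L(g)\neq 0$, yields precisely the multiset $\mathcal{K}_L(I;i)$ of the statement. Since Theorem \ref{th:main2} asserts that $\mathcal{K}_{i}(L)$ is a link-homotopy invariant, and since the image of an invariant multiset under a coordinate-wise well-defined map is itself an invariant multiset, $\mathcal{K}_L(I;i)$ is a link-homotopy invariant.

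The only subtlety, which is really a design feature rather than a genuine obstacle, is to ensure that the $X_I$-coefficient extraction is compatible with the basing-change and conjugation manipulations used in the proof of Theorem \ref{th:main2}. This is guaranteed by the very construction of the indeterminacy $D_{E_i(\phi(g))}(I)$ (see Notation \ref{nota:coeff}), which is calibrated precisely to absorb the variations of $\kappa_{E_i(\phi(g))}(I)$ induced by such manipulations on strictly lower-order coefficients. Consequently, no separate argument is required beyond invoking Theorem \ref{th:main2}.
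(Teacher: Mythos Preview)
Your argument is correct, but it takes a slightly different route from the paper's. The paper does not deduce the corollary from the \emph{statement} of Theorem~\ref{th:main2}; instead, as indicated in Remark~\ref{rem:cor}, it observes that the \emph{proof} of Theorem~\ref{thm:cutmain}(2) already treats each sequence $I$ separately: Claims~\ref{claim0} and~\ref{claim2}, applied with $W=E_i(\phi(g))$, show directly that each residue class $\widetilde{\kappa}_\C(I;g)$ is individually preserved under basing change, so the multiset $\mathcal{K}_L(I;i)$ is invariant without ever assembling the full polynomial.

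Your approach, by contrast, derives the result purely from the statement of Theorem~\ref{th:main2} via the extraction map $\pi_I$. This is arguably cleaner as a logical deduction, but it rests on the claim that the ambient modulus $D_{E_i(\phi(g))}(I)$ can be recovered from the polynomial $\sum_J \widetilde{\kappa}_L(J;g)X_J$ alone. That claim is correct---one checks inductively on $|I|$ that $\gcd\{\kappa(J):J\subsetneq I\}$ is determined by the residue classes $\widetilde{\kappa}(J)$, using that each $D(J)$ for $J\subsetneq I$ is a multiple of the gcd of the strictly lower-length coefficients---but you only assert it. The paper's route sidesteps this recoverability issue entirely. Your final paragraph, which drifts back into the proof of Theorem~\ref{th:main2}, is in fact unnecessary once you have established that $\pi_I$ is well-defined on the multiset: invariance of $\mathcal{K}_L(I;i)=\pi_I\bigl(\mathcal{K}_i(L)\bigr)$ then follows immediately.
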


Examples of computations for these invariants $\mathcal{K}_i$  and $\mathcal{K}(I;i)$ will be given in Section \ref{sec:ex}. 

\begin{remark}
For each component $K_i$ of the link map $L$, Theorem \ref{th:main2} provides a set of invariants associated with each positive element in $\nR G(L_i)$, while Theorem \ref{th:main} yields a unique family of numerical invariants for each component. 
We give in Lemma \ref{lem:Y1} an example of two link maps that cannot be distinguished by Theorem \ref{th:main}, but are shown to not be link-homotopic by Theorem \ref{th:main2}. 
However, there are also cases of link maps where the opposite
phenomenon occurs, that is, pairs of links maps that cannot be
distinguished by the invariants of Theorem \ref{th:main2}, but that
are detected by the numerical invariants of Theorem \ref{th:main}; a concrete example is given in Lemma \ref{lem:compare}.
\end{remark}

\subsection{Higher order Kirk invariants via covering spaces}\label{sec:covering}

Levine had observed \cite[Note 2]{Kirk} that the Kirk invariant of a link map $f : S^2_1 \cup S^2_2\rightarrow S^4$ admits the following natural definition. 
Denoting by $M_2$ the exterior $S^4\setminus f(S^2_2)$, one can consider the restriction map $f: S^2_1\rightarrow M_2$, which lifts to the infinite cyclic covering $\widetilde{M_2}$.   
Then 
\begin{equation}\label{eq:kirk}
 \sigma_1(L) = \sum_{k\ge 1} \left(f_0(S^2_1)\cdot f_k(S^2_1) \right)(t^k-1)\in \Z[t], 
\end{equation}
\noindent where $f_0(S^2_1)$ is a fixed lift of $f(S^2_1)$, with $k$th translate denoted by $f_k(S^2_1)$, and  where $\cdot$ is the geometric intersection. 

This alternative definition  extends very naturally to link maps with arbitrarily many components. 	
Let $f:S_1^2\cup\cdots \cup S_n^2\longrightarrow S^4$ be an $n$-component link map with image $L=K_1\cup\cdots\cup K_n=f(S_1^2)\cup\cdots\cup f(S_n^2)$. 
As above, denote by $L_i:=L\setminus K_i$, $M_i=S^4\setminus L_i$,  and $G(L_i)=\pi_1(M_i,x_0)$, for each $i\in\{1,\cdots,n\}$.
As a substitute for the infinite cyclic covering in the $2$-component case, we consider the covering space $\widetilde{M}_i$ of $M_i$ associated to 
the kernel of the natural projection $G(L_i)\longrightarrow \nR G(L_i)$. 
Note that the covering transformation group of $\widetilde{M}_i$ is isomorphic to $\nR G(L_i)$. 
Let $f_0:S^2_i\longrightarrow \widetilde{M}_i$ be a lift of 
the restriction map $f:S^2_i\longrightarrow M_i$. Then we have an equivariant 
intersection
\[\sigma_i(L):= \sum_{g\in \nR G(L_i)}(f_0(K_i)\cdot g(f_0(K_i))
(g-1)\in \mathbb{Z}\nR G(L_i),  \]
which essentially recovers (\ref{eq:kirk}) for $n=2$. More precisely, for $n=2$ and $i=1$, (\ref{eq:kirk}) represents \lq half\rq\, of the above sum, given by summing only over  \emph{positive} elements of $\nR G(L_i)$, see Remark \ref{rem:eco} below.

\begin{remark}\label{rem:rototo}
The geometric intersection numbers $f_0(S^2_i)\cdot g(f_0(S^2_i)$ in the defining equation for $\sigma_i(L)$, coincide for positive elements $g$ with the coefficients $\rho_L(g)$ introduced in Section \ref{sec:refine}.
\end{remark}

\begin{remark}\label{rem:eco}
It is natural to consider defining numerical link-homotopy invariants  of $L$ similar to those of Theorem \ref{th:main}, 
by considering the coefficients of the reduced Magnus expansion $E_i(\sigma_i(L))$.
Note however that the sum $\sigma_i(L)$ formally rewrites as  
\[ \sigma_i(L) = \sum_{g\in \nR G(L_i)^+}\rho_L(g) \left(\phi(g) + \phi(g^{-1}) - 2\right)
                     = - \sum_{g\in \nR G(L_i)^+}\rho_L(g) \left(\phi(g) -1\right)\left(\phi(g^{-1}) - 1\right).  \]
This formula suggests that numerical invariants extracted from the coefficients of $E_i(\sigma_i(L))$ 
might vanish while those constructed in the previous subsections do
not. We provide a concrete example in Remark \ref{rem:eko}, which illustrates the advantage of considering only \emph{positive} elements in our definition.
\end{remark}

\begin{remark}\label{rem:stirling}
Although quite natural, the above generalization $\sigma_i$ of the Kirk invariant seems to have only appeared in the recent preprint of Stirling \cite{stirling23},  which focusses on the $3$-compo\-nent case. 
It is not so hard to see that $\sigma_i(L)$ is a link-homotopy invariant for \lq based' link maps, see \cite[Prop.~5.3]{stirling23} and also \cite[Prop.~6.1.3]{Stirling}.  
It is however much more difficult in practice to derive from this construction an explicit and computable invariant of (non based) link maps. 
This is addressed by Stirling, who defined an
equivalence relation on $(\nR F_{n-1})^n$ that encompasses algebraically all possible basing changes; 
Stirling managed to make this equivalence relation explicit in the
$3$-component case. As a matter of fact, Stirling's invariant  detects the $3$-component link map from Remark \ref{rem:eko}.
\end{remark}

\section{Cut-diagrams}\label{sec:cutd}

Let us begin with a review of the theory of cut-diagrams, in a general setting.
Note that the definition of cut-diagrams given below, corresponds to the notion of \quote{self-singular cut-diagram} given in \cite[\S 7.3]{AMY}. 
Let $\Sigma$ be an oriented surface with $n$ connected components, possibly with boundary.

\subsection{Cut-diagrams and surface-link maps}\label{sec:cut_def}

Consider a compact oriented (generically, but not necessarily properly) immersed $1$-manifold $P$ in $\Sigma$. This splits $\Sigma$ into connected components called \emph{regions}. 
 Endow each transverse double point of $P$ with an over/under decoration, as in usual tangle diagrams, splitting $P$ into \emph{cut arcs}. 
 The resulting diagram on $\Sigma$ then contains crossings and univalent vertices. Univalent vertices in the interior of $\Sigma$ shall be of two types, being either black $\bullet$ or white $\circ$ dots. 

\begin{defi}\label{def:cut}
  A \emph{cut-diagram} over $\Sigma$ is obtained by labeling all cut arcs by regions, according to the following \emph{labeling rules}: 
 \begin{itemize}
  \item[1)] for each crossing, involving labels $A,B,C$ as shown on the left-hand side of Figure \ref{fig:cond}, the regions $A$ and $B$ are adjacent along a $C$-labeled cut arc as illustrated in the figure; 
  \item[2)] a cut arc containing a $\bullet$ dot in region $A$, is labeled by $A$ (see the center of Figure \ref{fig:cond});
  \item[3)] for each connected component of $\Sigma$, the set of
    $\circ$ dots is endowed with a partition into $2$-element subsets. Each subset $\{p_1,p_2\}$ is such that, if $p_1$ lies in some region $X$ and has incident cut arc labeled by $Y$,
    then $p_2$ lies in $Y$ with incident cut
    arc labeled by $X$ 
    (see the right-hand side of Figure \ref{fig:cond}), and the local orientations at $p_1$ and $p_2$ are the same; we call the subset $\{p_1,p_2\}$, an \emph{$(X,Y)$-pair}. 
 \end{itemize}
 \end{defi}
 
Note that by definition, for each $(X,Y)$-pair, $X$ and $Y$ are two regions of the \emph{same} connected component of $\Sigma$. 

Each univalent vertex inherits a sign, which is \emph{positive}, resp. \emph{negative}, if the incident cut arc is locally oriented outwards, resp. inwards. 
Accordingly, an $(X,Y)$-pair as in rule 3) above, is called a \emph{positive} or \emph{negative} $(X,Y)$-pair, according to the sign of these vertices. We shall also simply say that these two $\circ$ dots are \emph{paired}. 
In figures,  we shall often replace the local orientation at univalent vertices with this sign, see the middle and the right-hand side of Figure \ref{fig:cond}. 

\begin{figure}
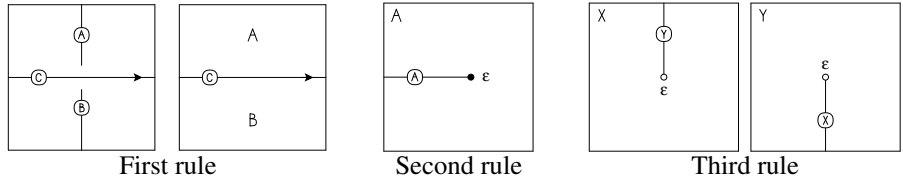

  \[ \begin{array}{ccc}
   \dessin{2cm}{Cond1} & \quad\dessin{2cm}{Cond2}& \quad\dessin{2cm}{CutSingular}\\[0.5cm]
   \textrm{First rule} & \quad\textrm{Second rule} & \quad\textrm{Third rule}
  \end{array}\]
  \caption{Labeling rules for cut-diagrams (here $\varepsilon=\pm$)}
  \label{fig:cond}
\end{figure}

The key point of Definition \ref{def:cut} is that any surface-link map in $4$-space (in the sense of the introduction), given by a continuous map of $\Sigma$ into $\R^4$, yields a cut-diagram. 
This arises from the notion of \emph{broken surface diagrams}, which are the natural analogue of classical knot diagrams for surface-link maps, see e.g. \cite{CS,AMW}.  

Recall that broken surface diagrams correspond to generic immersions of surfaces into $\R^3$, obtained as a composition of a surface-link map and a projection from $\mathbb{R}^4$ to $\mathbb{R}^3$. 
This produces lines of transverse double points, which may contain singular points, meet at triple points and/or end at branch points. 
As for knot diagrams, double points are enhanced with an extra over/under information, which is encoded by cutting off a neighborhood of the lowest preimage. 
The resulting local models for triple points, branch points and singular points, are given in Figure \ref{fig:local}. 
\begin{figure}
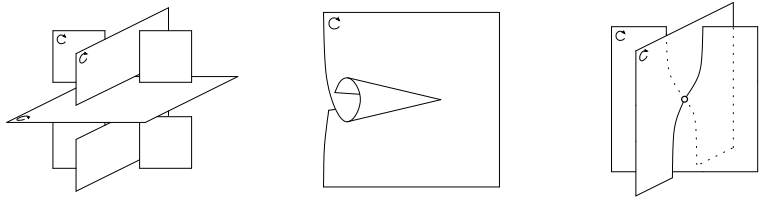

   \[
\dessin{2.5cm}{Triple}\quad \quad \quad \dessin{2.4cm}{Branch}\quad  \quad \quad \quad \dessin{2.6cm}{Singular}
\]
  \caption{Local models for triple points, branch points and singular points in a broken surface diagram}
  \label{fig:local}
\end{figure}
Observe that the over/under information \lq swaps\rq\, when traversing a singular point along a line of double points. 
Each line of double points also inherits a natural orientation from the ambient orientation and that of $\Sigma$.\footnote{More precisely, the orientation is chosen so that the local frame given by a positive normal vector to the overpassing region, a positive normal vector to the underpassing region, and a positive tangent vector to the line of double points, agrees with the ambient orientation of $\R^3$.}

An example is given on the left-hand side of Figure \ref{fig:yooohooo}, in the case of an embedding of $\Sigma=S^2$. 
\begin{figure}
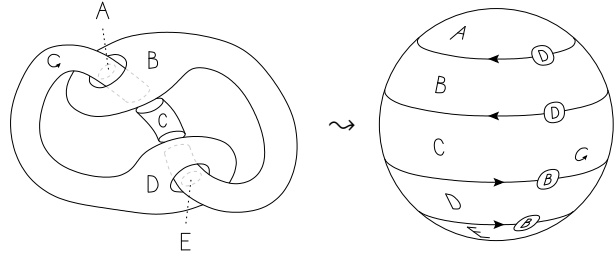

\[
\dessin{3.6cm}{KnottedSphere3}\ \leadsto\ \ \dessin{3.15cm}{CutSphere3}
\]
  \caption{Cut-diagram arising from a broken surface diagram}
  \label{fig:yooohooo}
\end{figure}

Now consider the abstract surface $\Sigma$, endowed with the preimages of all double points, and consider \emph{only the lower point set}. 
This forms a union $P$ of oriented immersed circles and/or intervals in $\Sigma$, which splits $\Sigma$ into regions as above; each triple point of the surface diagram provides an over/under information at the corresponding crossing of $P$, splitting $P$ into cut arcs. 
Labeling each cut arc, by the region containing the preimage with highest coordinate at the corresponding line of double points, satisfies automatically the labeling rules of Definition \ref{def:cut}, and thus provides a cut-diagram over $\Sigma$ for the given surface-link map. See Figure \ref{fig:yooohooo} for an example. 

\begin{defi}\label{def:cutop}
 Cut-diagrams arising in this way, from a surface-link map in $4$--space, are called \emph{topological cut-diagrams}.
\end{defi}

\begin{remark}
  Two nonequivalent surface-link maps may
  give rise to the same cut-diagram.
 Yet, as we shall recall below, they still retain all the data of the fundamental group that are needed to define our invariants.  
\end{remark}

\subsection{Reduced group of a cut-diagram}\label{sec:cut_group}

Let $\mathcal{C}$ be a cut-diagram over $\Sigma$. 

\begin{defi}
The \emph{group} of $\mathcal{C}$ is the group $G(\mathcal{C})$ generated by its regions, and with a relation $B^{-1}A^C$ for every pair of regions $(A,B)$ that are adjacent along a $C$-labeled cut arc as in Figure \ref{fig:cond} (left).  
An $i$th meridian is a region of the $i$th component of $\Sigma$, for some $i$, when regarded as a generator of  $G(\mathcal{C})$. 
\end{defi} 
Notice that a relation $[X,Y]$ is in particular inherited from each $(X,Y)$-pair of $\circ$ dots in $\mathcal{C}$. 

\begin{nota}
 Given a region $A$ of the cut-diagram $\C$, we shall sometimes denote by $[A]$ its associated meridian, which is an element of $G(\C)$. 
\end{nota}

If $\mathcal{C}$ is a topological cut-diagram of a surface-link map $f$, it is easily verified that 
there is an isomorphism from $G(\mathcal{C})$ to the fundamental group of the exterior of the image $L$ of $f$, sending each $i$th meridian of $\C$ to a topological $i$th meridian of $L$. 
Moreover in this case, the assignment of $[R]\in G(\C)$ to each region $R$ of $\C$, is a \emph{coloring} in the sense of \cite{CKS}.  

Notice that any two $i$th meridians in  $G(\mathcal{C})$ are always conjugate, for any $i$, 
so that $G(\mathcal{C})$ is normally generated by a choice of one meridian for each component, and we can consider the \emph{reduced group}  $\nR G(\mathcal{C})$ (see Definition \ref{def:red}).
The isomorphism class of this quotient does not depend on the choice of meridians, and a presentation for this group is given in \cite[Thm.~5.18]{AMY}.

Now, let $\gamma$ be a path on $\Sigma$. 
We may freely assume that $\gamma$ meets $\mathcal{C}$ transversally in a finite number of regular points. 
We associate an element $w_\gamma$ in $G(\mathcal{C})$ as follows. 
For the $k$th intersection point between $\gamma$ and $\mathcal{C}$ met when running $\gamma$ according to its orientation,  
denote by $A_k$ the label of the cut arc met at this point, and by $\varepsilon_k$ the local sign of this intersection point.
Then\footnote{Our notation here differs from that of 
Section 2.2 of \cite{AMY}, where this element is denoted by $\widetilde{w}_\gamma$, while the notation $w_\gamma$ is used for a normalized element that is no longer needed in the present paper. }
$$
w_{\gamma}:=A_1^{\varepsilon_1}\cdots
A_{|\gamma\cap\mathcal{C}|}^{\varepsilon_{|\gamma\cap\mathcal{C}|}}\in G(\mathcal{C}). $$
We shall need the following, which is an easy consequence of the labeling rules of Definition \ref{def:cut}, see \cite[Lem.~2.10]{AMY}.  

\begin{lemma}\label{lem:redlong}
If $\gamma$ and $\gamma'$ are two homotopic generic paths, rel. boundary, on
the $i$th component of $\Sigma$, for some $i$, 
then $w_\gamma=w_{\gamma'}$ in the quotient of $\nR G(\mathcal{C})$ by 
the normal subgroup $N_{(i)}$ generated by $i$th meridians. 
\end{lemma}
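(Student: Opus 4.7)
The plan is to decompose the homotopy from $\gamma$ to $\gamma'$ into a finite sequence of elementary local moves and to check that each move preserves $w_\gamma$ in the quotient $\nR G(\C)/N_{(i)}$. After a small perturbation, we may assume the one-parameter family interpolating between $\gamma$ and $\gamma'$ is generic on the $i$th component of $\Sigma$, so that at all but finitely many times the moving path meets $\C$ transversely. The non-generic instants fall into three types of local events: (a) a finger of the path is pushed across a cut arc, creating or destroying a pair of consecutive opposite intersections with the same cut arc; (b) the path is pushed across a crossing of $\C$; (c) the path is pushed across a univalent vertex, either black or white.

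Move (a) inserts or deletes a cancelling subword $A^{\ve} A^{-\ve}$ in $w_\gamma$, hence is trivial already in $G(\C)$. For (b), with $A,B,C$ the labels around the crossing as in the first rule of Definition \ref{def:cut}, pushing the arc across locally rewrites a subword of $w_\gamma$ using the defining relation $B = C^{-1}AC$ (or a conjugate/inverse of it, depending on which of the four sectors $\gamma$ enters from and exits to); a routine case check across the symmetries of the crossing shows that in every instance the two local words represent the same element of $G(\C)$. For (c), the key is that the letter introduced is always an $i$th meridian. Indeed, by labeling rule 2), pushing $\gamma$ past a black dot lying in a region $A$ inserts or deletes a single factor $A^{\pm 1}$ in $w_\gamma$; since the dot lies on the $i$th component, $A$ is an $i$th meridian and thus $A^{\pm 1}\in N_{(i)}$. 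Similarly, by labeling rule 3), pushing past a white dot $p_1$ lying in a region $X$ with incident cut arc labeled $Y$ modifies $w_\gamma$ by $Y^{\pm 1}$, and since $Y$ is also a region of the $i$th component (as the paired dot $p_2$ lies in $Y$), the letter $Y$ is again an $i$th meridian, so the change vanishes modulo $N_{(i)}$.

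The main obstacle is not any single case in the above analysis, each of which is local and essentially routine, but rather the initial transversality step: one must verify that a generic homotopy of paths on $\Sigma$ admits a decomposition into finitely many moves of types (a)--(c), and that no higher-codimension degenerations (for instance, crossing two cut arcs simultaneously, or passing through a vertex and a cut arc at the same time) need separate treatment. This is a standard consequence of the stratification of $\C$ as a one-complex embedded in $\Sigma$ with vertices at the crossings and univalent points, and corresponds exactly to the content of \cite[Lem.~2.10]{AMY}, to which we may defer. Once this decomposition is in hand, concatenating the local identities above yields the desired equality $w_\gamma = w_{\gamma'}$ in $\nR G(\C)/N_{(i)}$.
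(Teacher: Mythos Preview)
Your argument is correct and follows the natural approach implied by the paper, which does not give a proof but simply says the result is ``an easy consequence of the labeling rules of Definition~\ref{def:cut}'' and defers to \cite[Lem.~2.10]{AMY}. Your decomposition into moves (a)--(c) and the case analysis using labeling rules 1)--3) is exactly how such a proof goes.

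One point to flag: your final citation of \cite[Lem.~2.10]{AMY} is circular---that reference \emph{is} the statement you are proving (the present lemma is a restatement of it). The transversality/stratification claim you invoke there is a standard general-position fact about paths on a surface relative to an immersed $1$-complex and needs no outside reference; you should simply state it as such rather than point back to the lemma itself.
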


\subsection{Topological moves for cut-diagrams}\label{cut_topmoves}

It is well-known that two broken surface diagrams represent equivalent surface-links if and only if they differ by a sequence of the seven Roseman moves given in \cite{Roseman3}. Note that one of these moves is known to be generated by the other six, see \cite{Yashiro}. 
One can translate Roseman moves into the langage of cut-diagrams.
The resulting so-called \emph{topological} moves are summarized, up to reflection, in Figure \ref{fig:topomoves}. 

Given a topological move on a cut-diagram, 
we call \emph{supporting disk}, the disk(s) intersecting the cut-diagram as shown in Figure \ref{fig:topomoves}. 
There, we identify the non represented regions of the cut-diagrams before and after the move; represented regions that are not contained in the supporting disks, are also identified canonically unless otherwise specified, see Remark \ref{rem:38}.
  
\begin{figure} 
  \[
  \hspace{-.75cm}  \begin{array}{ccccc}
    \dessin{1.7cm}{Dim2Move1_1} \stackrel{\textrm{T1}}{\longleftrightarrow} 
     \dessin{1.7cm}{Dim2Move1_2} & & 
    \dessin{1.7cm}{Dim2Move3_1} \stackrel{\textrm{T2}}{\longleftrightarrow} 
     \dessin{1.7cm}{Dim2Move3_2} & & 
\dessin{1.7cm}{BT2_1}\stackrel{\textrm{T3}}{\longleftrightarrow} 
     \dessin{1.7cm}{BT2_3}
  \end{array}
  \]
\vspace{.5cm}  
   \[
  \hspace{-.75cm}  \begin{array}{ccccc}
        \dessin{1.7cm}{Dim2Move4_1} \stackrel{\textrm{T4}}{\longleftrightarrow} 
     \dessin{1.7cm}{Dim2Move4_2} 
    & & 
    \dessin{1.7cm}{Dim2Move5_1} \stackrel{\textrm{T5}}{\longleftrightarrow} 
     \dessin{1.7cm}{Dim2Move5_2} & & 
    \dessin{1.7cm}{Dim2Move7_1} \stackrel{\textrm{T6}}{\longleftrightarrow} 
     \dessin{1.7cm}{Dim2Move7_2} 
  \end{array}
  \]
  \vspace{.5cm}  
   \[
     \hspace{-.75cm}  \begin{array}{c}
     \dessin{1.7cm}{T2bis_1} \stackrel{\textrm{T7}}{\longleftrightarrow} 
     \dessin{1.7cm}{T2bis_2} 
  \end{array}
  \]
  \caption[Topological moves for cut-diagrams]{Topological moves for cut-diagrams:\\ 
  \footnotesize{Here a move is valid if and only if, in the
    cut-diagrams before and after the move, the labeling rules of
    Definition \ref{def:cut} are fulfilled. In these pictures,
      different notations for regions or arc labels may refer to
      the same region. 
  }}
\label{fig:topomoves}
\end{figure}

\begin{remark}
We stress that, in order to be valid, some topological moves impose conditions on regions outside of the supporting disks. 
On one hand, each represented cut arc label $R$ implies the existence of a (possibly non represented) sheet supporting the region $R$ in the diagram. On the other hand, by the first labeling rules of Definition \ref{def:cut}, each crossing of cut arcs imposes the existence of some other sheet where the corresponding regions have the desired adjacency properties. 
Together with those in Figure \ref{fig:topomoves}, 
these non represented sheets are precisely the sheets involved in the corresponding Roseman move. 
Note that the case of move T7 is special, in the sense that the sheets supporting the \lq Reidemeister 3-like move\rq\, are not sufficient to ensure the existence of the other ones, which are therefore imposed as part of the move; this explains the fact that a T7 moves involves two supporting disks. 
\end{remark}

\begin{remark}\label{rem:38}
  Although topological moves of Figure \ref{fig:topomoves} are
  pleasingly reminiscent of usual local moves of knot theory, we
  stress that they are not entirely local moves on cut-diagrams, because 
  the labeling on cut arcs might change outside of the supporting disks.
  On one hand, a move that makes a region disappear, such as T2, T3 or T6, is only valid if this region never occurs anywhere as the label of some cut arc. 
  On the other hand, some of the topological moves, namely T4, T5 and T6, merge two regions, or split a region into two. In the former case, all the cut arcs labeled by one of the two merging regions are relabeled by the new region; in the latter case, the cut arcs labeled by the split region are relabeled by any of the two new regions, in such a way that the labeling rules of Definition \ref{def:cut} are satisfied. 
 We shall see in Lemma \ref{lem:iso} that these \lq labeling issues\rq\, are however harmless as far as the cut-diagram group is concerned.
\end{remark}

  By Roseman's theorem \cite[Thm.~1]{Roseman3}, we have the following.
\begin{theo}\label{th:m}
Two topological cut-diagrams of isotopic surface-links, are related by a sequence of topological moves.  
\end{theo}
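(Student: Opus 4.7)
The strategy is to invoke Roseman's theorem \cite[Thm.~1]{Roseman3} and then translate each Roseman move, move by move, into the language of cut-diagrams. Concretely, given two topological cut-diagrams $\C$ and $\C'$ coming from isotopic surface-link maps $f$ and $f'$, we fix generic broken surface diagrams $D$ and $D'$ of $f$ and $f'$ respectively. By Roseman's theorem, $D$ and $D'$ are connected by a finite sequence of Roseman moves (together with planar isotopies of the projection). The task is to check that each individual Roseman move, when read off on the abstract surface $\Sigma$ via the recipe of Section \ref{sec:cut_def} (preimages of double lines, enhanced with over/under data at triple points, black dots at branch points, paired white dots at singular points), either leaves the cut-diagram unchanged or realizes one of the moves T1--T7 of Figure \ref{fig:topomoves}, up to reflection.

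The plan is to proceed case by case through Roseman's list of seven moves (of which six are independent by \cite{Yashiro}). First, the moves that alter only the relative heights of locally parallel sheets and do not produce triple points, branch points, or singular points near the support of the move are absorbed into the planar isotopy on $\Sigma$ and leave the cut-diagram unchanged. Second, the Reidemeister-I-type Roseman move introducing a cancelling pair of branch points along a double line yields a cut arc with two black dots of opposite signs on it: this is move T1. Third, the Reidemeister-II-type and the triple-point birth/death moves are transcribed as T2 and T3, since they correspondingly create/destroy a bigon of cut arcs or a triangular region bounded by three crossings of cut arcs. Fourth, the moves involving a branch point passing through a sheet (or a triple point passing through a branch point) translate into T4 and T6, which is why these moves may merge or split regions. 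Finally, the tetrahedral Roseman move translates into the Reidemeister-3-like move T7, with the auxiliary supporting disk accounting for the four sheets meeting at the tetrahedron.

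An additional check concerns the data carried by paired white dots, which record self-intersections rather than inter-component intersections. Since $f$ and $f'$ are assumed to be surface-link maps (pairwise disjoint images), Roseman moves can be chosen to occur away from the images of other components, so that the pairings of white dots are either untouched, or else the paired dots co-move along an ambient isotopy of $\Sigma$; this precisely matches rule 3) of Definition \ref{def:cut} being preserved under T1--T7. The move T5, which changes only the pairing data and labels on $\circ$ dots, accommodates the case where a Roseman move slides a singular point across a sheet.

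The main obstacle is really bookkeeping: each Roseman move comes with many local sign/orientation conventions and with implicit conditions on the surrounding sheets (compare Remark \ref{rem:38}), and one must verify that the labeling rules of Definition \ref{def:cut} remain satisfied on both sides of the move. In particular, the fact that region-merging/splitting moves (T4, T5, T6) are genuinely nonlocal forces one to argue that the corresponding Roseman moves always ensure the global consistency required for the labeling. Once every Roseman move is matched with a (possibly trivial) composition of $\textrm{T}1$--$\textrm{T}7$, concatenating these correspondences along the Roseman sequence from $D$ to $D'$ yields the desired sequence of topological moves from $\C$ to $\C'$.
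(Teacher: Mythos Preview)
Your overall strategy---invoke Roseman's theorem and translate each Roseman move into one of T1--T7---is exactly what the paper does (indeed the paper says nothing more than ``By Roseman's theorem'' and defers the details to \cite{AMY}).

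However, your third paragraph is confused and should be deleted. The theorem concerns \emph{isotopic surface-links}, i.e.\ embedded surfaces; there are no self-intersections, hence no $\circ$ dots at all in the relevant cut-diagrams, and nothing about rule 3) of Definition \ref{def:cut} needs checking. Moreover, your description of T5 as ``changing only the pairing data and labels on $\circ$ dots'' and as accommodating ``a Roseman move sliding a singular point across a sheet'' is simply wrong: T5 is a topological move (no $\circ$ dots appear in any of T1--T7), and moving a singular point across a sheet is the self-singular move S2, relevant only to Theorem \ref{th:lh}, not to the present statement.

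Your move-by-move correspondence in the second paragraph is also imprecise in places (for instance, there is no Roseman move that merely ``alters relative heights of locally parallel sheets'' and leaves the cut-diagram untouched; and T2, T3 both involve $\bullet$ dots, i.e.\ branch points, which your description of them does not reflect). These are bookkeeping errors rather than a gap in the strategy, but if you are going to spell out the dictionary you should get it right---or, as the paper does, simply cite Roseman and \cite{AMY}.
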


\begin{remark}\label{rem:move_rules}
 Theorem \ref{th:m}  was already observed in \cite{AMY}. The list of moves given in \cite[Fig.~14]{AMY} is a different one, but it can easily be shown using the labeling rules of Definition \ref{def:cut}, to be equivalent to those of Figure \ref{fig:topomoves}. 
\end{remark}

\begin{lemma}\label{lem:iso}
 Suppose that $\C'$ is a cut-diagram obtained from $\C$ by some topological move.
 There is a canonical isomorphism from $G(\C)$ to $G(\C')$, which 
 yields the identity on the region labeling of any cut arc that is not included in the supporting disks. 
\end{lemma}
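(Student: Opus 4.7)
The plan is to prove the lemma by a case analysis over the seven topological moves T1--T7, with the isomorphism constructed explicitly on generators and checked on relations, exactly as a sequence of Tietze transformations of the defining presentation of $G(\C)$.

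First, for each move, I define a candidate map $\phi\colon G(\C)\to G(\C')$ on generators by the following rules: any region unaffected by the move is sent to itself; a region of $\C$ that gets split by the move is sent to either of its two pieces in $\C'$ (the choice being dictated by the cut arc labelings, as in Remark \ref{rem:38}); conversely, two regions of $\C$ merged by the move are both sent to the single resulting region of $\C'$; and a region that is created or destroyed by the move is expressed, via the local defining relations, as a conjugate of a neighbouring region. By symmetry, a candidate inverse map $\psi\colon G(\C')\to G(\C)$ is constructed by the same recipe applied to the reverse move.

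To show that $\phi$ and $\psi$ are well-defined homomorphisms, one must check that every defining relation of the source presentation is sent to a relation that holds in the target. For relations arising from cut arcs or $\circ$-dot pairs lying inside the supporting disk, this is a finite, purely local computation in each of the seven cases. For moves T1, T2, T3 and T7 this reduces to standard Tietze manipulations familiar from Wirtinger-type presentations: T1 cancels or introduces a redundant generator/relation pair, T2 and T3 add or remove a region through a conjugation relation, and T7 is a ``Reidemeister III''-type move whose verification amounts to associativity of conjugation under the adjacency relations forced by the additional sheets that the move requires.

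The main technical obstacle lies in moves T4, T5 and T6, and more specifically in the non-locality highlighted in Remark \ref{rem:38}: cut arcs outside the supporting disk may be relabeled when two regions are merged, or when a region is split into two. This is exactly the situation that the formulation of the lemma is designed to accommodate. Whenever an outside cut arc is relabeled from $R$ to $R'$, the regions $R$ and $R'$ are by construction sent to the same generator by $\phi$, so the relation $B = A^R$ carried by that arc is mapped verbatim to the relation $\phi(B) = \phi(A)^{R'}$ already holding in $G(\C')$; the same holds for $\psi$. Finally, the compositions $\phi\circ\psi$ and $\psi\circ\phi$ yield the identity on each generator by construction, and the identity-on-external-labels clause is immediate from the very definition of $\phi$.
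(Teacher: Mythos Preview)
Your proposal is correct and follows essentially the same approach as the paper: a case-by-case verification through the seven topological moves, realized as Tietze transformations on the defining presentations. The paper's own proof is terser---it invokes the known correspondence with colorings of surface-links and then singles out the one subtle point you also isolate, namely that for moves T4, T5, T6 a split region $R\mapsto R_1,R_2$ satisfies $[R_1]=[R_2]$ in the target group, which is what makes the external relabeling harmless; your treatment folds this into the ``purely local computation'' without stating it explicitly, but the content is the same.
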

\begin{proof}
The canonical isomorphism from $G(\C)$ to $G(\C')$, is merely a reformulation and straightforward extension of a well-known fact on colorings of surface-links, see e.g. \cite[Thm.~3.4]{PR}. 
 This is verified by successively analyzing the seven moves of Figure \ref{fig:topomoves}, which naturally yield associated Tietze transformations between group presentations. 
 We note that this isomorphism is the identity on each generator $[R]$ such that region $R$ does not intersect the supporting disk;  a region $R$ of $\C$, resp. $\C'$, that gets split under a topological move T4, T5 or T6, gives rise to two new regions $R_1$ and $R_2$, but one easily verifies that the corresponding group elements $[R_1]$ and $[R_2]$ are equal in $G(\C')$, resp. $G(\C)$. 
 The latter part of the statement follows, since a region which is 
 included in the supporting disk of a topological move never occurs as a cut arc label. 
\end{proof}
 
  \subsection{Self-singular moves for cut-diagrams}\label{cut_singmoves}

Roseman moves were extended to broken surface diagrams of surface-link maps in \cite{AMW}, 
where three self-singular Roseman moves were introduced. 
The \emph{self-singular moves} shown in Figure \ref{fig:singmoves} 
are the translations of these self-singular Roseman moves in the langage of cut-diagrams. 
Here, we identify regions before and after each move in a canonical way, as with topological moves, and we define the supporting disks analogously. 
The first two lines of the figure corresponds to passing a singularity across a triple point: there are two versions, depending on whether the sheet that does not contain the singularity (region $C$ in the figure) passes under (move S1) or over (move S2) the other two. Move S3 of Figure \ref{fig:singmoves} is the cut-diagram version of a finger/Whitney move, while move S4 encompasses a cusp-homotopy, that passes a singularity across a branch point. 

\begin{figure}
 \[
 \hspace{-.75cm}  \begin{array}{c}
    \dessin{1.7cm}{SingMove1bis_1} \stackrel{\textrm{S1}}{\longleftrightarrow} 
    \dessin{1.7cm}{SingMove1bis_2}
   \end{array} \]
   \vspace{.25cm}  
\[\begin{array}{c}
    \dessin{1.7cm}{SingMove1bis_1bis} \stackrel{\textrm{S2}}{\longleftrightarrow} 
    \dessin{1.7cm}{SingMove1bis_2bis}\qquad\,
   \end{array}
  \]
\vspace{.5cm}  
 \[
 \hspace{-.75cm}  \begin{array}{ccc}
    \dessin{1.7cm}{SingMove2bis_1} \stackrel{\textrm{S3}}{\longleftrightarrow} 
    \dessin{1.7cm}{SingMove2bis_2}
 & \quad \quad & 
    \dessin{1.7cm}{SingMove4bis_1} \stackrel{\textrm{S4}}{\longleftrightarrow} 
    \dessin{1.7cm}{SingMove4bis_2} 
  \end{array}
  \]
  \caption{Self-singular moves for cut-diagrams:\\{\footnotesize
      here, $\varepsilon=\pm$, and $A$ and $B$ are always regions of a
      same connected component; in each move, $\circ$ dots decorated
      by the same sign are paired. In these pictures,
      different notations for regions or arc labels may refer to
      the same region. 
      }}\label{fig:singmoves}
\end{figure} 

As a consequence of \cite[Prop.~2.4]{AMW}, we have:
\begin{theo}\label{th:lh}
Two cut-diagrams of link-homotopic surface-link maps, are related by a sequence of topological or self-singular moves.  
\end{theo}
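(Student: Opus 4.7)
The plan is to reduce to the analogous statement for broken surface diagrams: by \cite[Prop.~2.4]{AMW}, two broken surface diagrams of link-homotopic surface-link maps differ by a finite sequence of ordinary Roseman moves together with three self-singular Roseman moves. It then suffices to check that each such move, when transported to the cut-diagram level via the construction recalled in Section \ref{sec:cut_def}, induces one of the topological moves T1--T7 of Figure \ref{fig:topomoves} or one of the self-singular moves S1--S4 of Figure \ref{fig:singmoves}.

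For ordinary Roseman moves, nothing new is required: this verification has essentially been carried out in the proof of Theorem \ref{th:m} (compare \cite[Fig.~14]{AMY} and Remark \ref{rem:move_rules}), so I would simply cite it. The new content concerns the three self-singular Roseman moves of \cite{AMW}: (i) pushing a singular point across a triple point, (ii) a finger/Whitney pair creation or cancellation, and (iii) pushing a singular point across a branch point (a cusp-homotopy). Since the cut-diagram only records the lowest preimage at each line of double points together with the relative heights at triple points, moves (ii) and (iii) translate directly and unambiguously to S3 and S4, respectively. Move (i), however, must be split into two cut-diagram versions according to whether the sheet disjoint from the singularity passes \emph{under} both other sheets or \emph{over} them at the triple point; the two resulting local pictures yield exactly S1 and S2, since the over/under pattern on cut arcs changes accordingly.

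The main obstacle I expect is the careful local bookkeeping for move (i). One must verify that for every orientation of the cut arcs and every pairing assignment at the two $\circ$ dots, the labels before and after the move satisfy the labeling rules of Definition \ref{def:cut}, and that the $(A,B)$-pair of paired $\circ$ dots traverses the triple point correctly (so that the conditions of rule 3) of Definition \ref{def:cut} are preserved, with $A$ and $B$ still lying in the same component of $\Sigma$). Configurations not literally matching S1 or S2 up to reflection must be shown, by a preliminary application of topological moves from Figure \ref{fig:topomoves}, to reduce to the displayed ones. Once this local case analysis is complete, concatenating the move-by-move translations with the ordinary Roseman case gives the required sequence of topological and self-singular moves between the two cut-diagrams.
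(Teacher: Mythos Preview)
Your proposal is correct and follows essentially the same approach as the paper: the paper simply states the theorem as a consequence of \cite[Prop.~2.4]{AMW}, having already explained in the preceding paragraph that the self-singular moves S1--S4 are precisely the cut-diagram translations of the three self-singular Roseman moves of \cite{AMW} (with the triple-point move splitting into S1 and S2 according to whether the non-singular sheet passes under or over). Your outline spells out more of the local bookkeeping than the paper does, but the strategy is identical.
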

\begin{remark}
Note that the self-singular moves of Figure \ref{fig:singmoves} differ from those given in \cite[Fig.~16]{AMY}, but are generated by the latter ones. 
Another difference with \cite{AMY} lies in the definition of link-homotopy for surface-link maps (see \cite[Rem.~7.7]{AMY}); both notions however coincide for link maps.
\end{remark}

These topological and self-singular moves more generally define an equivalence relation on  cut-diagrams, which we shall call \emph{self-singular equivalence}. 
Note that  this equivalence relation encompasses the link-homotopy by Theorem \ref{th:lh}. 
\medskip

The following natural extension of Lemma \ref{lem:iso} to the singular setting was already observed in \cite{AMW} in the topological case, and is easily verified by analyzing each self-singular move. 
\begin{lemma}\label{lem:isoR}
 Suppose that $\C'$ is a cut-diagram obtained from $\C$ by some self-singular move.
 There is a canonical isomorphism from $\nR G(\C)$ to $\nR G(\C')$, which yields the identity on the region labeling of any cut arc that is not included in the supporting disks.  
\end{lemma}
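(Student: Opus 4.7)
The plan is to prove the lemma by a case-by-case analysis of the four self-singular moves S1, S2, S3, S4 of Figure \ref{fig:singmoves}, in direct analogy with the proof of Lemma \ref{lem:iso}. For each move, I will read off from the supporting disks the local contributions of $\C$ and $\C'$ to their respective group presentations, and exhibit a correspondence between the regions of $\C$ and those of $\C'$ inducing the desired isomorphism of reduced groups.

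First, since $\C$ and $\C'$ agree outside the supporting disks, I am forced to take the candidate isomorphism to be the identity on every region---and hence on every cut-arc label---not included in these disks; this immediately yields the last assertion of the lemma. What remains is to describe the image of the regions inside the supporting disks and to verify that the local relations inherited from the labeling rules and from the paired $\circ$ dots are preserved modulo the defining relations of the reduced group (Definition \ref{def:red}).

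The crucial observation, and the reason why one must pass from $G(\C)$ to $\nR G(\C)$, is the following: every pair of paired $\circ$ dots lies in regions $A$ and $B$ of the \emph{same} connected component of $\Sigma$, so the associated meridians $[A]$ and $[B]$ are conjugate in $G(\C)$; hence their commutator $[[A],[B]]$ is automatically trivial in $\nR G(\C)$ by the Milnor-type relators of Definition \ref{def:red}. Consequently, any commutator relation contributed by paired $\circ$ dots appearing in the supporting disk of a move can be freely inserted or removed during the Tietze transformations that compare the local presentations on either side.

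Equipped with this observation, I expect each of the four cases to reduce to a short verification. Moves S1 and S2 exchange a configuration with a triple point and no singularity for one containing a singularity and paired $\circ$ dots; applying the triple-point relation on one side and the reduced-trivial commutator contributed by the paired $\circ$ dots on the other should collapse both presentations to a common one. Move S3 inserts or removes a single pair of paired $\circ$ dots together with cosmetic cut-arc relabelings, and introduces only a reduced-trivial commutator. Move S4, which handles a singularity at a branch point, is the most immediate case. The main technical obstacle I anticipate lies in the bookkeeping around the splitting, merging, or relabeling of cut arcs in the vicinity of the move, which will have to be carried out exactly as in the proof of Lemma \ref{lem:iso} to confirm that the candidate map is indeed well-defined and canonical.
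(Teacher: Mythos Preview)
Your proposal is correct and follows exactly the approach the paper indicates: the paper's own justification is simply that the lemma ``is easily verified by analyzing each self-singular move,'' and your case-by-case strategy with the key observation that the commutator $[[A],[B]]$ from any paired $\circ$ dots is automatically trivial in $\nR G(\C)$ (since $A,B$ lie on the same component and are therefore conjugate) is precisely what makes this verification go through. One small inaccuracy: moves S1 and S2 do not exchange a singularity-free configuration for a singular one---both sides already contain paired $\circ$ dots, as these moves slide a singularity across a triple point---but this does not affect the validity of your argument, only the bookkeeping you will have to carry out once you consult the actual figures.
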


\section{Higher order link maps invariants: proof of the invariance Theorems}\label{sec:proof_linkmaps}

The purpose of this section is to use the theory of cut-diagrams to prove the invariance Theorems \ref{th:main0}, \ref{th:main} and \ref{th:main2}. 
Therefore, throughout this section we restrict ourselves to the case where $\Sigma$ is a union of $n$ copies of $S^2$.

\subsection{The based case} 

Let us first prove the first half of Theorem \ref{th:main0} (the second half is shown at the end of Section \ref{sec:unbased}). 
This will come as a direct corollary of Theorem \ref{thm:cutmain0} below, which is a more general version for cut-diagrams. Hence we shall begin by extending our invariants to general cut-diagrams. 

Let $\mathcal{C}$ be a cut-diagram over $\Sigma$. 

\begin{defi}
 A \emph{basing} of $\CC$ is a choice of a region $R_i$ of the $i$th component $\Sigma_i$ of $\Sigma$, for each $i$. 
 This choice is materialized by a basepoint $b_i$ in the interior of $R_i$.   
 \end{defi}

Let us fix some index $i$. 
We denote by $\nR G(\C)_i$ the quotient of $\nR G(\mathcal{C})$ by 
the normal subgroup $N_{(i)}$ generated by $i$th meridians.\footnote{Note that $\nR G(\C)_i$ 
is the reduced group of the cut-diagram obtained from $\C$ by deleting the $i$th component, and deleting all cut arcs labeled by regions from the $i$th component. } 
As an immediate corollary to \cite[Thm.~5.18]{AMY}, we have that $\nR G(\C)_i$ is isomorphic to the reduced free group $\nR F^i_{n-1}$, where 
$F^i_{n-1}$  stands for the free group  on the $n-1$ generators $\{R_1,\cdots,R_n\}\setminus \{R_i\}$. 
We denote by $\phi: \nR G(\C)_i \rightarrow \nR F^i_{n-1}$  this  isomorphism. 

Following Definition \ref{def:positive}, we say that an element $x$ of $\nR G(\C)_i$ is \emph{positive}, 
if it is trivial or if the first non vanishing term of $E_i(\phi(x))$ has positive coefficient.

Let $(p_1,p_2)$ be two $\circ$ dots forming an $(X,Y)$ pair of $\mathcal{C}$, 
where $X$ and $Y$ are two regions of the $i$th component of $\Sigma$. 
Pick two paths $\alpha_1$ and $\alpha_2$, running  from $b_i$ to $p_1$ and $p_2$, respectively. 
We define an element $w_{p_1,p_2}$ of $\nR G(\C)_i$  as follows. 
$$ w_{p_1,p_2}:=\left\{\begin{array}{ll}
                            w_{\alpha_1}\cdot w_{\alpha_2}^{-1} & 
                            \textrm{ if $w_{\alpha_1}\cdot w_{\alpha_2}^{-1}$ is positive,}\\
                            w_{\alpha_2}\cdot w_{\alpha_1}^{-1} & \textrm{ otherwise.}
                            \end{array}
                            \right. $$

Set 
\begin{equation}\label{eq:cut-kirky}
 S_i(\C) := \sum_{\{p_1,p_2\}} \ve(p_1,p_2)(\phi(w_{p_1,p_2})-1)\in  \mathbb{Z}\nR  F^i_{n-1}, 
\end{equation}
\noindent where the sum runs over all subsets 
$\{p_1,p_2\}$ of the partition of $\circ$ dots on the $i$th component
of $\C$ (see Definition \ref{def:cut}), and where $\ve(p_1,p_2)$ is the common sign of such a pair, in the sense of Section \ref{sec:cut_def}. 

\begin{theo}\label{thm:cutmain0}
For any index $i$, 
$S_i(\C)$ is a self-singular equivalence invariant of the cut-diagram $\C$ endowed with a basing.  
\end{theo}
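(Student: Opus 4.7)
The plan is to prove invariance of $S_i(\C)$ under each of the topological moves T1--T7 and self-singular moves S1--S4 separately, leveraging the canonical isomorphism $\Phi \colon \nR G(\C) \to \nR G(\C')$ of Lemma \ref{lem:isoR}, which descends to an isomorphism $\overline{\Phi} \colon \nR G(\C)_i \to \nR G(\C')_i$ acting as the identity on generators associated to regions not meeting the supporting disks. For every move, invariance of $S_i$ reduces to two tasks: (i) producing a natural bijection between the pairs of $\circ$ dots on the $i$th component before and after the move, with matching signs; and (ii) verifying that $\overline{\Phi}(w_{p_1,p_2}) = w_{\Phi(p_1),\Phi(p_2)}$ for each matched pair. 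Note that if a topological move splits or merges a region containing the basepoint $b_i$, Lemma \ref{lem:iso} lets us canonically relocate $b_i$ without altering the element $w_\gamma$ of any path.

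For the topological moves T1--T7 and the self-singular moves S1 and S2, no pair of $\circ$ dots is created or destroyed. For each pair $\{p_1, p_2\}$, one picks paths $\alpha_1, \alpha_2$ from $b_i$ to $p_j$ that avoid the supporting disks; their words $w_{\alpha_j}$ then only involve cut arc labels unchanged by the move, hence are preserved by $\Phi$. If a chosen path is forced to cross a supporting disk, Lemma \ref{lem:redlong} allows us to homotope it out without altering $w_{\alpha_j}$ in $\nR G(\C)_i$. Since positivity of $w_{\alpha_1} w_{\alpha_2}^{-1}$ is preserved by $\overline{\Phi}$ (Remark \ref{rem:positive}), the element $w_{p_1,p_2}$ is preserved, and so is its contribution to $S_i$.

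It remains to analyze S3 and S4, where pairs are created or destroyed; this is where the main difficulty lies. For move S4 (cusp homotopy at a branch point), a single pair $\{p_1, p_2\}$ appears inside the supporting disk. The paths $\alpha_1, \alpha_2$ can be chosen to coincide outside a small neighborhood of the branch point, so that $w_{\alpha_1} w_{\alpha_2}^{-1}$ is represented by a loop on the $i$th component of $\Sigma$ that is null-homotopic around the branch point; by Lemma \ref{lem:redlong}, $w_{p_1,p_2}$ lies in $N_{(i)}$ and hence equals $1$ in $\nR G(\C)_i$, so its contribution $\phi(w_{p_1,p_2}) - 1$ vanishes. For move S3 (finger/Whitney move), two pairs $\{p_1^+, p_2^+\}$ and $\{p_1^-, p_2^-\}$ with opposite signs appear together. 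The key geometric observation is that the four new dots can be joined to $b_i$ by four paths that, outside the supporting disk, pair up via homotopy so that the two loops $\alpha_1^+ (\alpha_2^+)^{-1}$ and $\alpha_1^- (\alpha_2^-)^{-1}$ represent the \emph{same} element of $\nR G(\C)_i$. The positivity convention then yields $w_{p_1^+,p_2^+} = w_{p_1^-,p_2^-}$, and the opposite signs $\ve(p_1^\pm, p_2^\pm)$ make the two new contributions to $S_i(\C')$ cancel exactly.

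The main obstacle is the S3 analysis: writing down the correct path choices, checking the equality of the two loops in $\nR G(\C)_i$ (possibly modulo subtle meridian corrections absorbed by $N_{(i)}$), and reconciling this with the positivity branching in the definition of $w_{p_1,p_2}$. Once this cancellation is established, combining all the cases completes the proof.
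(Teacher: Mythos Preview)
Your treatment of the topological moves and of moves S3 and S4 matches the paper's approach and is essentially correct. However, there is a genuine gap in your handling of move S2.

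You group S2 together with S1 and the topological moves, claiming that for each pair $\{p_1,p_2\}$ one can choose paths $\alpha_1,\alpha_2$ that avoid the supporting disks, or homotope them out via Lemma~\ref{lem:redlong}. This fails for S2: the $\circ$ dots involved in an S2 move lie \emph{inside} the supporting disk, so the endpoints of $\alpha_1,\alpha_2$ are inside the disk and the paths cannot be homotoped, rel endpoints, to avoid it. Moreover, unlike S1 (where a neighborhood of the $\circ$ dots is literally unchanged by the move), S2 genuinely alters the local picture near these dots: one of the two paths must cross a cut arc whose position switches from one side of the pair to the other under the move. The paper singles out S2 as ``more delicate'' for exactly this reason, and performs an explicit computation (Figure~\ref{fig:zemove}): if before the move $w_{\alpha_1}=g$ and $w_{\alpha_2}=hD^{\varepsilon}$, then after the move $w_{\alpha'_1}=D^{-\varepsilon}g$ and $w_{\alpha'_2}=h$, so that $w_{\alpha_1}w_{\alpha_2}^{-1}=gD^{-\varepsilon}h^{-1}=w_{\alpha'_1}w_{\alpha'_2}^{-1}$ and $w_{p_1,p_2}$ is preserved.

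Your identification of S3 as ``the main obstacle'' is therefore misleading: S3 and S4 are handled by the straightforward cancellation and triviality arguments you gave, while S2 requires the additional local computation that your proposal omits.
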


\begin{remark}\label{rem:i}
 If $\C$ is a topological diagram of some link map $L$, then the pair $(p_1,p_2)$ is given by the two preimages of a singular point $p$ of $L$, 
 and  $w_{p_1,p_2}$ is an associated \emph{positive} element of $\nR G(L_i)$, as defined in Section \ref{sec:pos}, so that we have $S_i(\C)=S_i(L)$. 
Hence Theorem \ref{thm:cutmain0}, combined with Theorem \ref{th:lh}, readily implies the first half of Theorem \ref{th:main0}. 
\end{remark}

\begin{proof}
By definition, it suffices to show that $S_i(\C)$ remains  unchanged under a topological or self-singular move. 

The case of topological moves is a routine verification. 
More precisely, for all topological moves we may freely assume up to homotopy that $\alpha_1\cup \alpha_2$ is disjoint from the supporting disks. 
By Lemmas \ref{lem:redlong} and \ref{lem:iso}, we then have that $w_{p_1,p_2}$ remains unchanged in the (reduced) cut-diagram group for any pair $(p_1,p_2)$ of $\circ$ dots. 

For a self-singular move, we similarly observe using Lemmas \ref{lem:redlong} and \ref{lem:isoR} that $E_i(w_{p_1,p_2})$ does not change, 
for any pair $(p_1,p_2)$ of $\circ$ dots that is disjoint from the supporting disks. 
But we must also consider the situation where the pair $(p_1,p_2)$ is contained in the supporting disks of the move. 
This is a case-by-case verification as follows: 
\begin{itemize}
   \item Move $S1$ does not affect a neighborhood of the $\circ$ dots that it involves, hence leaves the associated paths locally unchanged.
   \item Move $S2$ is more delicate, since it changes these paths in the supporting disks. 
   More precisely, suppose that a move $S2$ involves the pair $(p_1,p_2)$ of $\circ$ dots on the $i$th component of $\Sigma$, 
   and that the paths $\alpha_1$ and $\alpha_2$ are as shown on the left-hand side of Figure \ref{fig:zemove}. 
   The associated words are of the form $w_{\alpha_1}=g$ and $w_{\alpha_2}=hD^{\ve}$ for some $g,h\in G(\C)$ and some sign $\ve$. 
   Denote by $\alpha'_1$ and $\alpha'_2$ the images of $\alpha_1$ and $\alpha_2$, respectively, under this move $S2$, see  the right-hand side of Figure \ref{fig:zemove}.  
   Then we have  $w_{\alpha'_1}=D^{-\ve}g$ and $w_{\alpha'_2}=h$, and  the word $w_{p_1,p_2}$ itself remains unchanged. 
   \begin{figure}
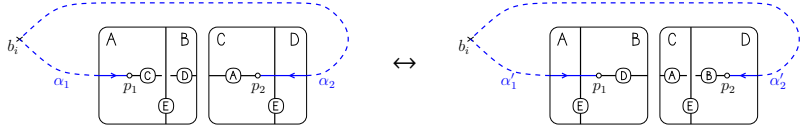

   \[
\hspace{-.75cm}  \begin{array}{ccc}
    \dessin{2cm}{SingPatocheMove} & \leftrightarrow &\dessin{2cm}{SingPatocheMove2} 
  \end{array}
  \]
  \caption{Applying an $S2$ move}\label{fig:zemove}
  \end{figure}
  \item A move $S3$ applied from left to right in Figure \ref{fig:singmoves}, introduces two pairs of $\circ$ dots with opposite signs, 
  hence two extra terms in $S_i(\C)$. But by Lemma \ref{lem:redlong}, the paths associated with these two pairs of $\circ$ dots may be freely chosen to be 
  parallel, so that they represent the same (reduced) group element. The two extra terms thus cancel in $S_i(\C)$. 
  \item A move $S4$ introduces a pair of $\circ$ dots $(p_1,p_2)$ in its supporting disk, hence one extra term in $S_i(\C)$. 
    By Lemma \ref{lem:redlong}, the associated paths $\alpha_1$ and
    $\alpha_2$ to the basepoint $b_i$ can be chosen to be parallel
    outside of this disk,
   so that they represent the same group element.
    It follows that $w_{p_1,p_2}=1$, so that  this extra pair  does not contribute to $S_i(\C)$.
\end{itemize}
This concludes the proof that $S_i$ is a self-singular equivalence invariant of based cut-diagrams.  
\end{proof}

\subsection{The unbased case}\label{sec:unbased}

In order to prove Theorems \ref{th:main} and \ref{th:main2}, 
we shall likewise consider cut-diagram versions of these constructions. 
We shall again make use of Notation \ref{nota:coeff} throughout.

On one hand, following Section \ref{sec:kappa}, we take the reduced Magnus expansion $E_i$ of $S_i(\C)$, for each $i$. 
We obtain in this way finite sums $E_i(\C) := E_i\left( S_i(\C) \right)$, similar to Equation (\ref{eq:Ekirky}):
  $$ E_i(\C) = \sum_{I} \kappa_{E_i(\C)}(I)X_{I}\in   \Lambda_i .$$
For any non-repeated sequences $I$ in $\{1,\ldots,n\}\setminus\{i\}$, we denote by 
$\widetilde{\kappa}_{\C}(I;i)$ the residue class of $\kappa_{E_i(\C)}(I)$ modulo $D_{E_i(\C)}(I)$. 

On the other hand, 
following Section \ref{sec:refine}, one can rewrite Equation (\ref{eq:cut-kirky}) as 
$$ S_i(L) = \sum_{g\in \nR G(\mathcal{C})_i^+} \rho_\C(g)(\phi(g) -1)\in \mathbb{Z}\nR  F^i_{n-1} $$
for some integer coefficients $\rho_\C(g)$, 
where  $\nR G(\mathcal{C})_i^+$ is the set of positive elements in $\nR G(\C)_i$. 
For any $g\in \nR G(\C)_i^+$ and any non-repeated sequence $I$ in $\{1,\cdots,n\}\setminus\{i\}$, 
denote by 
$\tilde{\kappa}_\C(I;g)$ the residue class of $\kappa_{E_i(\phi(g))}(I)$ modulo $D_{E_i(\phi(g))}(I)$. 

Theorems \ref{th:main} and \ref{th:main2} follow immediately from the next result, by the same observation as in Remark \ref{rem:i}. 

\begin{theo}\label{thm:cutmain}
Let $i\in \{1,\cdots,n\}$, and let $I$ be a non-repeated sequence in $\{1,\cdots,n\}\setminus \{i\}$. 
\begin{enumerate}
\item 
The residue class $\widetilde{\kappa}_{\C}(I;i)$ is a self-singular equivalence invariant of the cut-diagram $\C$. 
 \item
 $\textrm{ }$\\[-0.87cm]
$$ \mathcal{K}_{i}(\C):=\left\{ \left(\rho_\C(g), \small{\sum}_{\large{I}} \tilde{\kappa}_\C(I;g)X_I \right)\, \vert \, \textrm{$g\in \nR G(\C)_i^+$ such that $\rho_\C(g)\neq 0$} \right\} $$
is a self-singular equivalence invariant of the cut-diagram $\C$. 
\end{enumerate}
\end{theo}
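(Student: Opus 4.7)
The plan is to bootstrap from Theorem~\ref{thm:cutmain0}, which already establishes that $S_i(\C)$ is a self-singular equivalence invariant of \emph{based} cut-diagrams. Since both $\widetilde{\kappa}_\C(I;i)$ and $\mathcal{K}_i(\C)$ are computed directly from $S_i(\C)$, they automatically inherit invariance under self-singular moves, for a fixed basing. Hence the whole content of Theorem~\ref{thm:cutmain} reduces to proving that these residue classes are independent of the chosen basing, in the spirit of Milnor's $\bar\mu$-indeterminacy.

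\smallskip
\noindent\textbf{Algebraic reformulation of basing changes.} A change of basing at the $i$th component is irrelevant since we quotient by $i$th meridians. A change at the $j$th component, $j\neq i$, corresponds via $\phi$ to precomposing $\nR F^i_{n-1}$ with the conjugation automorphism $\sigma_j:x_j\mapsto g_j^{-1}x_jg_j$, for some $g_j\in\nR F^i_{n-1}$ (as already recorded in Remark~\ref{rem:positive}), and a general basing change is a composition of such $\sigma_j$'s.

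\smallskip
\noindent\textbf{The substitution formula.} Writing $E_i(g_j)=1+U_j$ in $\Lambda_i$, one computes
\begin{equation*}
E_i(\sigma_j(x_j)) \;=\; (1+U_j)^{-1}(1+X_j)(1+U_j) \;=\; 1+X_j+T_j,
\end{equation*}
where $T_j\in\Lambda_i$ is a sum of monomials of length $\geq 2$ each containing exactly one factor $X_j$. The key combinatorial point is that for any $W\in\Lambda_i$, the polynomial $\sigma^*W$ obtained by substituting $X_j\mapsto X_j+T_j$ (for each $j$) satisfies, for every non-repeating sequence $I$,
\begin{equation*}
\kappa_{\sigma^*W}(I) \;=\; \kappa_W(I) \;+\; \sum_{J\subsetneq I} c_{I,J}\,\kappa_W(J),
\end{equation*}
for some integers $c_{I,J}$, where the sum runs over strict subsequences $J$ of $I$. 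Indeed, expanding $\prod_\ell(X_{j_\ell}+T_{j_\ell})$ from a monomial $X_J$, the only way to produce $X_I$ is to retain a subsequence of $J$ as \emph{the} positions matching letters of $I$ and to use $T$-contributions to supply the missing letters; hence $X_J$ can contribute to $X_I$ only when $J$ is a subsequence of $I$, and the case $J=I$ gives coefficient one.

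\smallskip
\noindent\textbf{Conclusion.} An induction on $|I|$ using the above triangularity shows that both the residue class $\kappa_W(I)\bmod D_W(I)$ and the ideal $(D_W(I))$ itself are invariant under $\sigma$: the base case is immediate, and in the inductive step each $\kappa_W(J)$ with $J\subsetneq I$ is divisible by $D_W(I)$, so $\kappa_{\sigma^*W}(I)\equiv\kappa_W(I)\pmod{D_W(I)}$, and the same estimate applied to $D_{\sigma^*W}(I)$ gives equality of ideals. Applying this to $W=E_i(\C)$ yields part~(1). For part~(2), observe first that $\nR G(\C)_i^+$ is basing-independent by Remark~\ref{rem:positive}, and that $\rho_\C(g)$ counts signed $\circ$-pairs whose associated word represents the element $g\in\nR G(\C)_i$ --- an object intrinsic to $\C$, independent of $\phi$. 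Then apply the substitution analysis to each $W=E_i(\phi(g))$ for $g$ a positive element with $\rho_\C(g)\neq 0$: each residue $\widetilde{\kappa}_\C(I;g)$ is basing-independent, hence so is the full multiset $\mathcal{K}_i(\C)$.

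\smallskip
The main obstacle is the combinatorial triangularity in the substitution formula: verifying that the \emph{correction} $T_j$ only produces monomials $X_I$ from monomials $X_J$ with $J$ a strict subsequence of $I$, and that the diagonal coefficient is exactly $1$. Once this is in hand, the rest is a direct Milnor-style indeterminacy argument, carried out uniformly for both parts~(1) and~(2), and simultaneously yielding Corollary~\ref{th:main3} by restricting attention to a single monomial $X_I$ in the second coordinate.
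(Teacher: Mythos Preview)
Your reduction to basing-independence via Theorem~\ref{thm:cutmain0}, and your substitution/triangularity argument for a basing change at a component $j\neq i$, are both correct and match the paper's strategy. But there is a genuine gap: your claim that ``a change of basing at the $i$th component is irrelevant since we quotient by $i$th meridians'' is false, and the paper treats precisely this case as one of its two main steps.

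Changing the basepoint $b_i$ on $\Sigma_i$ does not affect the isomorphism $\phi$, but it \emph{does} change each word $w_{p_1,p_2}$: if $\gamma$ is a path on $\Sigma_i$ from the new basepoint to the old one, then $w_{p_1,p_2}$ becomes its conjugate $w_\gamma\, w_{p_1,p_2}\, w_\gamma^{-1}$. The point you miss is that $w_\gamma$ is \emph{not} in general a product of $i$th meridians: the cut arcs on $\Sigma_i$ crossed by $\gamma$ may be labeled by regions of \emph{other} components (topologically, these arise where the $i$th sheet passes under some $j$th sheet in the broken surface diagram). So $w_\gamma$ is typically nontrivial in $\nR G(\C)_i$, and the conjugation genuinely alters $S_i(\C)$. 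This also invalidates your assertion in part~(2) that $\rho_\C(g)$ is intrinsic to $\C$: under this basing change, the coefficient attached to $g$ migrates to the conjugate $w_\gamma\, g\, w_\gamma^{-1}$.

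The paper handles this via its Claim~\ref{claim0}: writing $E_i(w_\gamma)=1+U$, one has $E_i(w_\gamma)\,W\,E_i(w_\gamma^{-1})=W+UW-WU$ in $\Lambda_i$, so $\kappa_{\bullet}(I)$ changes only by terms $\kappa_U(I_1)\kappa_W(I_2)-\kappa_W(I_1)\kappa_U(I_2)$ over concatenations $I=I_1I_2$, all divisible by $D_W(I)$. Your own triangularity framework would cover this immediately (conjugation contributes only via proper initial or terminal segments of $I$), and for part~(2) one further notes that conjugation by $w_\gamma$ is a bijection of $\nR G(\C)_i^+$ preserving the lowest-degree term, hence preserving the multiset. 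So the fix is easy --- but the case cannot be dismissed.
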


The rest of this section is devoted to the proof of Theorem \ref{thm:cutmain}, and we assume that an index $i\in \{1,\cdots,n\}$, and a non-repeated sequence  $I$ in $\{1,\cdots,n\}\setminus \{i\}$ is fixed throughout. 
(We shall also prove the second half of Theorem \ref{th:main0} at the end of the section.)
\\
 
Since these invariants are all uniquely determined by $S_i(\C)$, which is a self-singular equivalence invariant of based cut-diagrams by Theorem \ref{thm:cutmain0}, it suffices to show the independence under basing change. 
We first note that the exact same argument as in Remark \ref{rem:positive}, ensures that the notion of positivity  in $\nR G(\C)_i$, is independent of the choice of basing for $\mathcal{C}$. 
 A different choice of basing affects the construction in two ways, as discussed below.\\

 On one hand, this changes the paths $\alpha_1$ and $\alpha_2$ associated to any pair of $\circ$ dots. 
 Changing the basepoint $b_i$ to $b'_i$ on the $i$th component of $\C$, indeed changes the pair of arcs $(\alpha_1,\alpha_2)$ to 
 $(\gamma\alpha_1,\gamma\alpha_2)$ for some path $\gamma$ running from $b'_i$ to $b_i$.  
 Note that the isomorphism $\phi:\nR G(\C)_i\rightarrow \nR F^i_{n-1}$ is not modified by this basing change.
 In the reduced free group $\nR F^i_{n-1}$, this  
turns the element $w_{p_1,p_2}$ into its conjugate $w_\gamma w_{p_1,p_2} w_\gamma^{-1}$. 
\begin{remark}\label{rem:basemoi}
The above argument in particular implies that changing the basing of the cut-diagram $\C$, changes $S_i(\C)$ by a conjugate. 
\end{remark}
Now let $W$ be an element of  $\Lambda_i$. 
Setting $E_i(w_\gamma) = 1+U\in \Lambda_i$, we have 
$E_i(w_\gamma) W E_i(w_\gamma^{-1}) = W +UW - WU$, 
and therefore, for any sequence $I$ in $\{1,\cdots,n\}$ we have 
$$ \kappa_{E_i(w_\gamma) W E_i(w_\gamma^{-1})}(I) = \kappa_{W}(I) + \sum_{I=I_1I_2} \left( \kappa_U(I_1)\kappa_W(I_2) - \kappa_W(I_1)\kappa_U(I_2) \right), $$
\noindent where the sum runs over all nonempty sequences $I_1,I_2$ whose concatenation $I_1I_2$ is the sequence $I$. 
We thus obtain the following general observation: 
\begin{claim}\label{claim0}
 We have $D_{E_i(w_\gamma)WE_i(w_\gamma^{-1})}(I)=D_{W}(I)$, and 
 $\kappa_{E_i(w_\gamma) W E_i(w_\gamma^{-1}))}(I)\equiv \kappa_{W}(I)\textrm{ mod $D_{W}(I)$}$.  
\end{claim}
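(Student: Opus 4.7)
My plan is to deduce both assertions directly from the displayed formula
\[
\kappa_{W'}(I) = \kappa_{W}(I) + \sum_{I=I_1 I_2} \bigl( \kappa_U(I_1)\kappa_W(I_2) - \kappa_W(I_1)\kappa_U(I_2) \bigr),
\]
with the abbreviation $W' := E_i(w_\gamma)\, W\, E_i(w_\gamma^{-1})$, the sum being over decompositions of $I$ into two \emph{nonempty} pieces. For the congruence $\kappa_{W'}(I)\equiv \kappa_W(I) \pmod{D_W(I)}$, the key observation is that each $I_1$ and $I_2$ in the sum is a proper subsequence of $I$, so in every summand one of the two factors $\kappa_W(I_1)$ or $\kappa_W(I_2)$ is by definition of $D_W(I)$ already divisible by $D_W(I)$; the whole sum is therefore divisible by $D_W(I)$, and the congruence follows at once.

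For the equality $D_{W'}(I) = D_W(I)$, I plan to argue by induction on the length of $I$. The base case $|I|=1$ is immediate, since the above sum is then empty, so $\kappa_{W'}(I)=\kappa_W(I)$, and the sets of proper subsequences used to compute $D_{W'}(I)$ and $D_W(I)$ coincide. For the inductive step, I would fix a proper subsequence $J\subsetneq I$ and apply the congruence already established at $J$, namely $\kappa_{W'}(J)\equiv \kappa_W(J)\pmod{D_W(J)}$. Since every proper subsequence of $J$ is also a proper subsequence of $I$, one has $D_W(I)\mid D_W(J)$; hence the congruence holds a fortiori modulo $D_W(I)$, and combined with $D_W(I)\mid \kappa_W(J)$ this gives $D_W(I)\mid \kappa_{W'}(J)$. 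Taking the gcd over all proper subsequences $J$ of $I$ yields $D_W(I)\mid D_{W'}(I)$.

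The reverse divisibility then comes for free by symmetry: the identity $W = E_i(w_\gamma^{-1})\, W'\, E_i(w_\gamma)$ is a conjugation of exactly the same form, so the argument applies verbatim with the roles of $W$ and $W'$ exchanged and provides $D_{W'}(I)\mid D_W(I)$. I do not anticipate any genuine obstacle: the whole thing is a formal manipulation of coefficients, and the only mildly delicate point is the induction bookkeeping, specifically the observation that $D_W(I)\mid D_W(J)$ for $J\subsetneq I$, which is what allows the congruence modulo $D_W(J)$ to be upgraded to a congruence modulo the smaller quantity $D_W(I)$ before taking the gcd.
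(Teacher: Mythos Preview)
Your argument is correct and matches what the paper has in mind: the paper does not give a separate proof of this claim, presenting it as an immediate consequence of the displayed coefficient formula, and your write-up supplies exactly the missing details. One small remark: your induction on $|I|$ is harmless but not actually used---the congruence $\kappa_{W'}(J)\equiv\kappa_W(J)\pmod{D_W(J)}$ that you invoke for $J\subsetneq I$ was established directly in your first paragraph for every sequence, so the divisibility $D_W(I)\mid\kappa_{W'}(J)$ follows without any inductive hypothesis (indeed, every $\kappa_W$-term in the formula for $\kappa_{W'}(J)$ is indexed by a subsequence of $J$, hence a proper subsequence of $I$).
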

In particular, taking respectively $W=E_i(\C)$ and $W=E_i(\phi(g))$ for all $g\in \nR G(\C)_i^+$, ensures the desired invariance property for (1) and (2). \\
 
 On the other hand, changing the basing at $b_j$ ($j\neq i$), modifies the isomorphism $\phi$ from the quotient $\nR G(\C)_i$   to the reduced free group $\nR F^i_{n-1}$. 
Suppose that we change the basing on the $j$th component  of $\mathcal{C}$, for some $j$.  
As observed in Remark \ref{rem:positive}, this implies substituting, in the reduced free group, the $j$th generator $x_j$ by a conjugate $g^{-1}x_jg$ from some $g\in \nR F^i_{n-1}$. 
At the level of the reduced Magnus expansion, this implies substituting 
each occurence of $X_j$ by $X_j+X_j U-U X_j$ for some $U \in \Lambda_i $.

Let us again consider an element $W$ of  $\Lambda_i $. We shall need the following: 
\begin{claim}\label{claim2}
Let $j\in \{1,\cdots ,n\}$. 
Then $D_W(I)$ and the residue class $\kappa_W(I)$ mod $D_W(I)$ are invariant under substituting  each occurence of the variable $X_j$ by $X_j+X_j U - U X_j$ for some $U \in \Lambda_i $. 
\end{claim}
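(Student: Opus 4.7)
The plan is to expand the substitution explicitly on the monomial decomposition of $W$ and to check that the resulting correction $\kappa_{W'}(I) - \kappa_W(I)$ is always an integer linear combination of coefficients $\kappa_W(J)$ with $J$ a proper subsequence of $I$; the divisibility statements then follow immediately. I first reduce to the case $j \ne i$ (otherwise $W$ contains no occurrence of $X_j$ and there is nothing to do) and may assume that $U$ has no constant term, since a scalar $c$ contributes $X_j c - c X_j = 0$ to the substitution.

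I would then proceed term by term. A monomial $\kappa_W(I') X_{I'}$ of $W$ that does not contain $X_j$ is unchanged by the substitution; if instead $X_{I'} = X_{I'_-} X_j X_{I'_+}$ (recall that $X_j$ appears at most once in a monomial of $\Lambda_i$) it transforms into
\[ \kappa_W(I')\bigl( X_{I'_-} X_j X_{I'_+} + X_{I'_-} X_j U X_{I'_+} - X_{I'_-} U X_j X_{I'_+} \bigr). \]
Collecting all contributions to the coefficient of $X_I$ in $W'$ and matching indices, I expect to obtain $\kappa_{W'}(I) = \kappa_W(I)$ whenever $j \notin I$, and otherwise, writing $I = I_- j I_+$,
\[ \kappa_{W'}(I) - \kappa_W(I) = \sum_{\substack{I_+ = K I'_+ \\ K \ne \emptyset}} \kappa_W(I_- j I'_+)\, \kappa_U(K) \; - \sum_{\substack{I_- = I'_- L \\ L \ne \emptyset}} \kappa_W(I'_- j I_+)\, \kappa_U(L). \]
Both $I_- j I'_+$ and $I'_- j I_+$ are proper subsequences of $I$, so each appearing $\kappa_W$-coefficient is divisible by $D_W(I)$; hence $\kappa_{W'}(I) \equiv \kappa_W(I) \pmod{D_W(I)}$.

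To close the proof I would separately establish $D_W(I) = D_{W'}(I)$. Applying the same computation to every proper subsequence $J \subsetneq I$ expresses $\kappa_{W'}(J) - \kappa_W(J)$ as an integer linear combination of coefficients $\kappa_W(J^*)$ with $J^* \subsetneq J$; since each such $J^*$ and $J$ itself is a proper subsequence of $I$, $D_W(I)$ divides both $\kappa_W(J)$ and each $\kappa_W(J^*)$, whence $D_W(I) \mid \kappa_{W'}(J)$ and so $D_W(I) \mid D_{W'}(I)$. For the reverse divisibility I would use that the substitution is invertible: as recalled in Remark \ref{rem:positive}, it is precisely the effect on reduced Magnus expansions of the conjugation $x_j \mapsto g^{-1} x_j g$ with $E_i(g) = 1+U$, and its inverse (conjugation by $g^{-1}$) induces a substitution of the very same form $X_j \mapsto X_j + X_j U' - U' X_j$ with $1 + U' = (1+U)^{-1}$ in $\Lambda_i$. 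Running the same argument on this inverse substitution yields $D_{W'}(I) \mid D_W(I)$, hence equality.

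The main obstacle is the explicit monomial bookkeeping underlying the key display: one has to carefully discard the many contributions that vanish in $\Lambda_i$ because of the repeated-variable relations (in particular contributions where the monomial $K$ drawn from $U$ shares a variable with its surroundings in $I$), and verify that the surviving ones involve only $\kappa_W$-coefficients indexed by proper subsequences of $I$, which is the crucial divisibility feature that makes the claim work.
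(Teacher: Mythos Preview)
Your explicit expansion is correct and cleanly yields
\[
\kappa_{W'}(J)=\kappa_W(J)+\sum_{J^*\subsetneq J}c_{J,J^*}\,\kappa_W(J^*)\qquad (c_{J,J^*}\in\Z),
\]
from which $\kappa_{W'}(I)\equiv\kappa_W(I)\pmod{D_W(I)}$ and $D_W(I)\mid D_{W'}(I)$ follow. This is more detailed than the paper's terse induction, but equivalent in spirit.

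There is, however, a genuine gap in your reverse divisibility step. You invoke Remark~\ref{rem:positive} to identify the substitution $T_U:X_j\mapsto X_j+X_jU-UX_j$ with the effect of conjugation $x_j\mapsto g^{-1}x_jg$, and then claim its inverse is $T_{U'}$ with $1+U'=(1+U)^{-1}$. Neither assertion is correct in $\Lambda_i$ for general $U$. Conjugation actually sends $X_j$ to $(1+U)^{-1}X_j(1+U)$, which agrees with $X_j+X_jU-UX_j$ only modulo higher degree terms; and a direct check (e.g.\ $j=3$, $U=X_1+X_2$, $n\ge 4$) shows $T_{U'}\circ T_U(X_3)=X_3+2(X_1X_3X_2+X_2X_3X_1-X_1X_2X_3-X_2X_1X_3)\neq X_3$. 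Since Claim~\ref{claim2} is stated for arbitrary $U\in\Lambda_i$, you cannot fall back on the group-theoretic inverse either.

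The fix is already contained in your computation: the displayed relation above is \emph{unitriangular} over $\Z$ with respect to the partial order ``$J^*$ is a subsequence of $J$''. Inverting it gives $\kappa_W(J)=\kappa_{W'}(J)+\sum_{J^*\subsetneq J}c'_{J,J^*}\kappa_{W'}(J^*)$ with integer coefficients, hence $D_{W'}(I)\mid\kappa_W(J)$ for every proper subsequence $J$ of $I$, so $D_{W'}(I)\mid D_W(I)$. This is exactly the mechanism behind the paper's induction on $|I|$, and it avoids any appeal to the form of the inverse substitution.
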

Assuming this result, the proof of (1) follows immediately by taking $W=E_i(\C)$ in Claim \ref{claim2}.
The proof of (2) likewise uses Claim \ref{claim2} with $W=E_i(\phi(g))$ for all $g\in \nR G(\C)_i^+$, but requires one additional argument. 
Changing the isomorphism $\phi$, may indeed change the fact that a given positive element in $\nR G(\C)_i^+$ contributes to 
$\mathcal{K}_{i}(\C)$. 
But if $\phi': \nR G(\C)_i \rightarrow \nR F^i_{n-1}$ is another isomorphism, induced by a different choice of basing, then there is a one-to-one correspondence between the sets $\{ g\in \nR G(\C)_i^+\, \vert \, \phi(g)\neq 1 \}$ and $\{g\in \nR G(\C)_i^+ \, \vert \, \phi'(g)\neq 1 \}$, which induces a bijection between the two corresponding sets $\mathcal{K}_{i}(\C)$.

It thus only remains to prove Claim \ref{claim2} to complete the proof of Theorem \ref{thm:cutmain}.
\begin{proof}[Proof of Claim \ref{claim2}]
The proof of this claim is by induction on the length $k$ of the sequence $I$. 
The case $k=1$ is trivial, since the substitution leaves the degree $1$ part of $W$ unchanged.  
The induction hypothesis ensures that, for a length $k$ sequence $I$, $D_W(I)$ is indeed invariant. 
Now, substituting each occurence of $X_j$ by $X_j+X_j U - U X_j$ in $W$, may create new terms that contribute to the coefficient $\kappa_W(I)$. 
But the indeterminacy $D_W(I)$ was precisely chosen so that all  such extra terms, which arise from lower degree terms in $W$, vanish modulo $D_W(I)$.\footnote{Note that a very similar argument already appears in \cite[Proof of (13)]{Milnor2}.}
Therefore $\kappa_W(I)$ mod $D_W(I)$ is invariant under substitution, which proves the claim. 
\end{proof}

\begin{remark}\label{rem:cor}
Observe that the above proof actually shows that, for each index $i$, and for any non-repeated sequence $I$ in $\{1,\ldots,n\}\setminus\{i\}$,  the multiset 
 $$ \left\{ \left(\rho_\C(g), \widetilde{\kappa}_\C(I;g) \right)\, \vert \, \textrm{$g\in \nR G(\C)_i^+$ ; $\rho_L(g)\neq 0$} \right\} $$
 is a self-singular equivalence invariant for $\C$. This fact readily implies Corollary \ref{th:main3}. 
\end{remark}

We finally prove the second half of Theorem \ref{th:main0}. 
This is a direct consequence of Lemma \ref{lem:fini} below, following Remark \ref{rem:i}. 
In what follows, we shall call \emph{trivial} the cut-diagram with no cut-arc.

\begin{lemma}\label{lem:fini}
If $\C$ is self-singular equivalent to the trivial 
(unbased) cut-diagram, then $S_i(\C)=0$ for all $i$ and for any choice of basing.
\end{lemma}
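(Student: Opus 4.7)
The plan is to reduce Lemma \ref{lem:fini} to Theorem \ref{thm:cutmain0} together with the observation that $S_i$ is trivially zero on a cut-diagram carrying no $\circ$ dots. Fix an index $i$ and a basing $B$ of $\C$. By hypothesis, there is a finite sequence of topological and self-singular moves connecting $\C$ to the trivial cut-diagram $\C_0$; I would transport $B$ through this sequence to obtain an induced basing $B_0$ of $\C_0$, keeping each basepoint $b_j$ fixed as a point of $\Sigma_j$ whenever its containing region survives, and using the canonical region identifications provided by Lemmas \ref{lem:iso} and \ref{lem:isoR} to track the basing otherwise. On the rare occasion that a move of type T2, T3 or T6 is about to erase the region currently carrying $b_j$, I would first slide $b_j$ into a neighbouring region: by Remark \ref{rem:basemoi} such a basing change only conjugates $S_i(\C)$ in $\mathbb{Z}\nR F^i_{n-1}$, so it cannot alter whether or not $S_i(\C)$ vanishes.

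Next, I would invoke Theorem \ref{thm:cutmain0} to obtain $S_i(\C,B) = S_i(\C_0,B_0)$. Because $\C_0$ has no cut-arcs, it carries no $\circ$ dots at all, so the indexing set in the sum (\ref{eq:cut-kirky}) defining $S_i(\C_0,B_0)$ is empty, giving $S_i(\C_0,B_0)=0$, and hence $S_i(\C,B)=0$. Finally, any other basing of $\C$ would give a conjugate value by Remark \ref{rem:basemoi}, which is again $0$; and since $i$ was arbitrary, the lemma follows for all indices.

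There is no genuine obstacle here: the statement is essentially an immediate corollary of Theorem \ref{thm:cutmain0}. The only mild bookkeeping point concerns basepoints located in regions that get deleted along the way to $\C_0$, and this is painlessly absorbed by the basing-change-as-conjugation principle of Remark \ref{rem:basemoi}.
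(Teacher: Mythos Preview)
Your argument is correct and follows the paper's own proof essentially verbatim: transport a basing through the sequence of moves to the trivial cut-diagram (adjusting when a region carrying a basepoint is about to disappear), observe that $S_i$ vanishes there since the sum \eqref{eq:cut-kirky} is empty, and absorb the basing changes via Remark~\ref{rem:basemoi}. The only imprecision is the literal equality $S_i(\C,B) = S_i(\C_0,B_0)$ claimed after intermediate basing slides---strictly speaking you only get equality up to conjugation (and, for slides of $b_j$ with $j\neq i$, up to an automorphism of $\nR F^i_{n-1}$), but since the target value is $0$ this is harmless, as you yourself already observed.
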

\begin{proof}
Suppose that $\C$ is self-singular equivalent to the trivial
(unbased) cut-diagram: this means that $C$ can be deformed into the trivial cut-diagram by a finite sequence of topological or self-singular moves. 
Now pick any basing on $\C$. Up to some basing change, we may assume that the above sequence of moves can still be performed, resulting in the (based) trivial cut-diagram $U$, which clearly satisfies $S_i(U)=0$. 
As observed in Remark \ref{rem:basemoi}, changing the basing of a cut-diagram, only changes the sum $S_i$ by a conjugate.  This shows that $S_i(\C)=0$ for the given basing, and concludes the proof.
\end{proof}

\section{Examples}\label{sec:ex}

\subsection{The Fenn-Rolfsen link map}

As a warmup, let us begin with recalling in some details the construction of the \emph{Fenn-Rolfsen link map} $FR$, which is the first example of a link map that is not link-homotopically trivial \cite{FR}. 
$FR=K_1\cup K_2$ is a 2-component link map, each sphere having a single (self-)singularity. 
It is built as follows, using the cross-section picture given in Figure \ref{fig:FR}: 
the figure depicts successive \lq slices\rq\, of $FR$, which are its intersection with $\R^3\times \{t\}\subset \mathbb{R}^4$ for various values of $t$.  
Consider in $\R^3\times \{0\}$ the Whitehead link $W$. 
A single self-crossing change on the first component of $W$, turns $W$ into the unlink. 
In $\R^3\times \{t\}$ for $t>0$, we consider the trace of this deformation, and cap off by two disks. 
More precisely, we deform the first component so that a self-intersection, represented by a $\circ$ in Figure \ref{fig:FR}, is created at $t=1$, thus producing at $t=2$ the result of a (self-)crossing change on the first component of $W$; an isotopy of the resulting link then yields the trivial link at $t=3$, which we then cap off by two disks at $t>3$. 
In $\R^3\times \{t\}$ for $t<0$, owing to the symmetry of the Whitehead link $W$, we may apply the same construction on component $2$. 
\begin{figure}
 \includegraphics[scale=0.65]{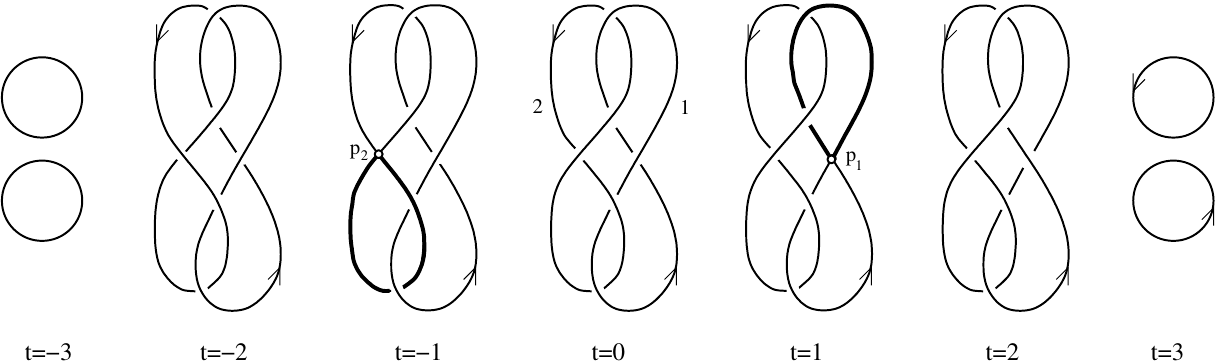}
 \caption{The Fenn-Rolfsen link map}\label{fig:FR}
\end{figure}

This process is sometimes called a \emph{Jin-Kirk suspension} of the Whitehead link $W$ in the literature. This construction applies more generally to any $n$-component link $K_1\cup\cdots \cup K_n$ such that $K_i$ is null-homotopic in the complement of $L\setminus K_i$ for two distinct values of $i$ in $\{1,\cdots,n\}$ (see the following three  subsections).  

Now, the invariant detecting $FR$ up to link-homotopy is essentially the Kirk invariant. This is a straightforward computation, along the following lines. 
Denote respectively by $p_1$ and $p_2$ the singularities on the first and second components of FR; note that $\ve(p_2)=-\ve(p_1)=1$.  
For $i=1,2$, the loop $\alpha_{p_i}$ on the $i$th component $K_i$ based at $p_1$, may freely be chosen to lie on the same slice as $p_i$, as represented in bold in Figure \ref{fig:FR}. 
We thus get immediately that the Kirk invariant satisfies 
$$ \left(\sigma_1(FR),\sigma_2(FR)\right) =  \left(1-t,t-1\right). $$

\subsection{Realizing higher order Kirk invariants}\label{sec:real}

 For $n\ge 3$, consider the link map $Y[n]$ in $\R^4$ given by Jin-Kirk suspension on the $n$-component link depicted in the middle of Figure \ref{fig:Link}.
\begin{figure}
 \includegraphics[scale=0.75]{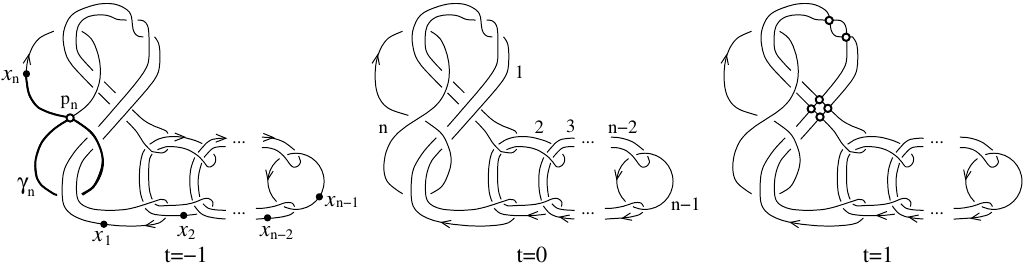}
 \caption{The link map $Y[n]$}\label{fig:Link}
\end{figure}
More precisely, $Y[n]$ is obtained as follows. 
In $\R^3\times \{t\}$ for $t<0$, consider the trace of a link-homotopy on the $n$th component: a singular point $p_n$ is created at (say) $t=-1$, and the resulting link living in a slice at $t<-1$ is isotopic to the trivial $n$-component link so that it can be capped off by disks.
In $\R^3\times \{t\}$ for $t>0$, consider the trace of a link-homotopy on the first component, creating six singularities that we assume to all lie in the slice at $t=1$; we leave it to the reader to verify that the resulting link in a slice at $t>1$ is also isotopically trivial, thus completing the construction.

\begin{lemma}\label{prop:real}
 For any value of $n\ge 3$, we have $\kappa_{Y[n]}\left(I;n\right) =0$ for any non-repeated sequence of at most $n-2$ indices in $\{1,\cdots,n-1\}$, and 
$$\tilde{\kappa}_{Y[n]}\left(1\cdots n-1;n\right) = 1. $$ 
Moreover,  $\mathcal{K}_{Y[n]}(I;n)$ is trivial for any non-repeated sequence of at most $n-2$ indices in $\{1,\cdots,n-1\}$, and  $\mathcal{K}_{Y[n]}(1\cdots n-1;n)=\left\{(1,1)\right\}$.\\
In particular, the link map $Y[n]$ is not link-homotopically trivial. 
\end{lemma}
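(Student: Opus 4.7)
The strategy exploits the fact that the Jin-Kirk construction of $Y[n]$ places a single self-singularity $p_n$ on the $n$th component $K_n$. Consequently the sum $S_n(Y[n])$ of Equation \eqref{eq:kirky} reduces to a single term $\ve(p_n)(\phi(g_{p_n})-1)$, and the families $\widetilde{\kappa}_{Y[n]}(I;n)$ and $\mathcal{K}_{Y[n]}(I;n)$ are determined by the reduced Magnus expansion of the unique positive element $g_{p_n}\in\nR G(L_n)$ together with the sign $\ve(p_n)$.

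The main geometric step is to identify $g_{p_n}$. The loop $\gamma_{p_n}$ can be traced on the cross-sectional picture of Figure \ref{fig:Link}: since $K_n$ is disjoint from $L_n$, any homotopy of $\gamma_{p_n}$ within $K_n$ is also a homotopy within $S^4\setminus L_n$, and the loop can in particular be pushed into the middle slice $\R^3\times\{0\}$ where it traces a parallel of the $n$th component of the underlying link $L[n]$. Thus $\phi(g_{p_n})\in\nR F^n_{n-1}$ is, up to inversion (resolved by the positivity convention of Definition \ref{def:positive}), the $n$th longitude $\ell_n$ of $L[n]$. The link $L[n]$ is designed so that $\overline{\mu}(12\cdots n)=\pm 1$ is its only non-vanishing Milnor link-homotopy invariant on non-repeating multi-indices of length at most $n$, which translates to
\[ E_n\bigl(\phi(g_{p_n})\bigr) \;=\; 1 \;+\; X_1X_2\cdots X_{n-1} \;+\; \sum_{\sigma\in\SSS_{n-1}\setminus\{\mathrm{id}\}} c_\sigma\, X_{\sigma(1)}\cdots X_{\sigma(n-1)}, \]
with integer coefficients $c_\sigma$ and no non-zero contribution at non-repeating monomials of length at most $n-2$. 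The orientation of the crossing change in the Jin-Kirk process can be chosen so as to yield $\ve(p_n)=+1$ together with the above positive leading coefficient.

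The residue class computations then follow directly from Notation \ref{nota:coeff}. For any non-repeating sequence $I$ of length at most $n-2$ in $\{1,\ldots,n-1\}$, the coefficient $\kappa_{E_n(Y[n])}(I)$ vanishes, so $\widetilde{\kappa}_{Y[n]}(I;n)=0$; and since the only positive element with non-zero multiplicity contributing to $\mathcal{K}_{Y[n]}(I;n)$ is $g_{p_n}$, for which $\widetilde{\kappa}_{Y[n]}(I;g_{p_n})=0$, the corresponding multiset is trivial. For $I=12\cdots(n-1)$, every proper subsequence has length at most $n-2$ and therefore vanishing coefficient, so the indeterminacy $D_{E_n(Y[n])}(I)$ equals $0$ and the residue class equals $\ve(p_n)\cdot 1=1$. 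Similarly $\mathcal{K}_{Y[n]}(1\cdots(n-1);n)=\{(1,1)\}$. Non-triviality of $Y[n]$ up to link-homotopy is then immediate from Theorem \ref{th:main}.

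The principal obstacle is the identification of $g_{p_n}$ with the $n$th longitude of $L[n]$. A self-contained argument in the spirit of the rest of the paper would construct an explicit cut-diagram for $Y[n]$ from its cross-sectional description and extract $g_{p_n}$ via the algorithm outlined in Section \ref{sec:compute}; a more direct geometric route requires tracking how the upper hemisphere of $K_n$ sits in the slab $\R^3\times[0,\infty)$ and exploiting that homotopies in $K_n$ are automatically homotopies in $S^4\setminus L_n$. A secondary bookkeeping step is aligning the positivity and sign conventions to produce $+1$ rather than $-1$ for the top coefficient, which amounts to fixing an orientation of the crossing change in the Jin-Kirk construction.
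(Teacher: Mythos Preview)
Your approach is essentially the same as the paper's: reduce $S_n(Y[n])$ to a single term coming from the unique singularity $p_n$ on $K_n$, identify the associated positive element $g_{p_n}$ with the $n$th longitude of the cross-sectional link, and read off the Magnus coefficients. The paper carries this out slightly more concretely in two respects. First, it works directly in the slice $\R^3\times\{-1\}$ containing $p_n$ rather than homotoping the loop back to the middle slice; there the $(n-1)$-component sublink $\tilde Y\cap(\R^3\times\{-1\})$ is visibly trivial, so the isomorphism with $\nR F^n_{n-1}$ is immediate. Second, instead of asserting that $L[n]$ has the right Milnor invariants, the paper observes that the trivial $(n-1)$-link together with the loop $\gamma_n$ is a copy of Milnor's link, and writes $g_n$ explicitly as the iterated commutator $[x_1,[x_2,\cdots[x_{n-2},x_{n-1}]\cdots]]$, from which the Magnus expansion is read off directly. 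Your last paragraph correctly flags both of these as the points where your sketch is less explicit than it could be.
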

\begin{proof}
Note that $\varepsilon(p_n)=+1$.  
Pick as basepoint on the $n$th component, the point $x_n\in \R^3\times\{-1\}$ shown on the left-hand side of Figure \ref{fig:Link}, and denote by $\gamma_n$ the loop based at $x_n$ shown in bold in the figure. 
 Pick further basepoints $x_i$ for other components of $Y[n]$, $i=1,\cdots,n-1$, as shown in the figure, all also lying in the slice $\R^3\times\{-1\}$. 
 By Lemma \ref{lem:isor}, this choice of basing for $Y[n]$ induces an isomorphism 
 $\nR G(\tilde{Y})\simeq \nR F^n_{n-1}$, 
 where $\tilde{Y}$ denotes the $(n-1)$-component link map obtained from $Y[n]$ by deleting the $n$th component, and where $F^n_{n-1}$ is the free group generated by $x_1,\cdots,x_{n-1}$. 
 Next consider the $(n-1)$-component link 
 $L:= \tilde{Y}\cap (\R^3\times \{-1\})$. Notice that the link $L$ is isotopic to the trivial link, and our choice of basing naturally yields isomorphisms 
 $$ \nR G(\tilde{Y})\simeq \nR G(L)\simeq \nR F^n_{n-1}, $$
 where $G(L)$ denotes the fundamental group of $(\R^3\times \{-1\})\setminus L$. 
Using the above isomorphisms and the link diagram in $\R^3\times \{-1\}$, one can easily express $g_{n}$, the unique positive element of $\nR G(\tilde{Y})$ representing the loop $\gamma_n$,  as
$$ g_{n} = \left[ x_1, \left[ x_2, \cdots  \left[x_{n-3},[x_{n-2},x_{n-1}]\right]\cdots \right] \right]\in \nR F^n_{n-1}. $$
\noindent (Notice that $L\cup \gamma_n$ is in fact a copy of Milnor's link, see \cite[\S 5]{Milnor1}.)
It follows that $$S_n(Y[n]) = (g_n-1).$$ 
The result follows, noting that the reduced Magnus expansion $E_n(g_n-1)$  is an alternate sum of $2^{n-2}$ monomials of degree $n-1$, starting with $X_1X_2\cdots X_{n-1}$.   
\end{proof}

\begin{remark}\label{rem:eko}
The family of link maps $Y[n]$, is a typical example of link maps that are detected by our higher order link-homotopy invariants, but that the variant suggested in Remark \ref{rem:eco}, which naturally arises from the covering spaces approach to the Kirk invariant, fails to detect. 
Using the notation of Section \ref{sec:covering}, we indeed have that 
$\sigma_n(Y[n]) = (g_n-1)(g_n^{-1}-1)$. 
The fact that $E_n(g_n-1)$  is a sum of monomials of degree $n-1$, implies that $E_n(g_n^{-1}-1)=-E_n(g_n-1)$, so that we have $E_n(\sigma_n(Y[n])) = 0$, for any value of $n\ge 3$. 
As a matter of fact, the same phenomenon occurs with the family of link maps $\mathcal{S}[n]$ introduced in the next subsection.
\end{remark}

\subsection{The (generalized) Stirling link map}\label{sec:stirling}

We next compute our link-homotopy invariants on another $1$-parameter family of link maps, which contains in the $3$-component case the example provided by Stirling in \cite[\S~5.3]{stirling23} (see also \cite[\S~6.2]{Stirling}) as  main computational example for his construction. 

Consider in $\mathbb{R}^3\times \{0\}$ the $n$-component link $L_S$ shown in the middle of Figure \ref{fig:stirlink}. 
 \begin{figure}
  \includegraphics[scale=0.75]{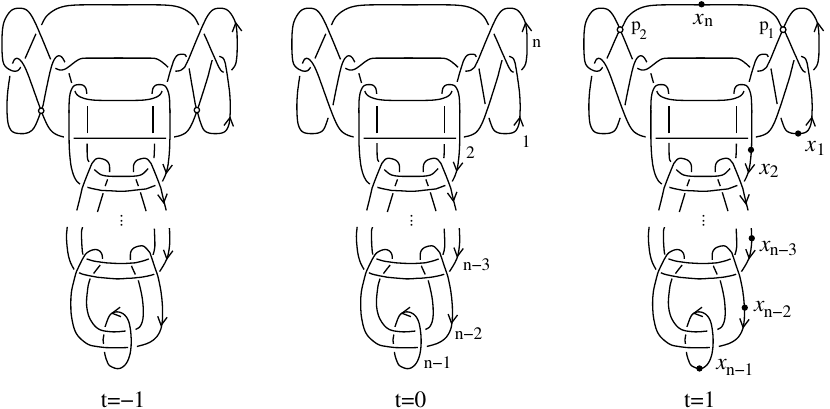}
   \caption{The (generalized) Stirling link map $\mathcal{S}[n]$}\label{fig:stirlink}
\end{figure} 

We define an $n$-component link map $S[n]$ by Jin-Kirk suspension on $L_S$ as follows. 
Consider the trace of a link-homotopy on the $n$th component of $L_S$, introducing two singularities $p_1$ and $p_2$ in  $\mathbb{R}^3\times \{1\}$. 
The resulting link in, say, $\mathbb{R}^3\times \{2\}$, is isotopic to the trivial link and can be capped off by disks in $\mathbb{R}^3\times \{t\}$ for some $t>2$. 
Similarly, we consider in $\mathbb{R}^3\times \mathbb{R}_{\le 0}$ the trace of a link-homotopy on the first component, that introduces two singularities in $\mathbb{R}^3\times \{- 1\}$ as shown in Figure \ref{fig:stirlink}. 

\begin{lemma}\label{lem:stirlink}
 For any value of $n\ge 3$, we have $\kappa_{S[n]}\left(I;n\right) =0$ for any non-repeated sequence of at most $n-2$ indices in $\{1,\cdots,n-1\}$, and 
$$\tilde{\kappa}_{\mathcal{S}[n]}\left(1\cdots n-1;n\right) = -1. $$ 
\noindent Moreover, $\mathcal{K}_{\mathcal{S}[n]}(1;n)=\{(1,1);(-1,1)\}$.
In particular, the link map $\mathcal{S}[n]$ is not link-homo\-to\-pi\-cally trivial.
\end{lemma}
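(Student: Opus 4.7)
The strategy is to imitate the computation carried out for $Y[n]$ in Lemma \ref{prop:real}, with the crucial difference that the $n$th component of $\mathcal{S}[n]$ carries two singularities $p_1, p_2$ instead of one, so that $S_n(\mathcal{S}[n])$ is a signed sum of two contributions, each of which has a nontrivial leading term of its own. First, I would record from the cross-section picture the signs of $p_1$ and $p_2$; the structure of the figure (where the two self-crossing changes on the $n$th component resolve together to untangle it from the rest of $L_S$) should give $\ve(p_1) = -\ve(p_2)$, and I will fix the convention $\ve(p_1) = -1,\ \ve(p_2)=+1$.

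Next, I would choose basepoints $x_1,\ldots, x_n$ in a middle slice $\R^3 \times \{0\}$, where the whole link map restricts to the link $L_S$, and observe that, by Lemma \ref{lem:isor}, this basing yields a canonical identification
\[ \nR G(\widetilde{\mathcal{S}}) \cong \nR G(L_S\setminus K_n) \cong \nR F^n_{n-1}, \]
where $\widetilde{\mathcal{S}}$ is obtained from $\mathcal{S}[n]$ by removing the $n$th component. With this identification in place, each loop $\gamma_{p_k}$ ($k=1,2$) used in Notation \ref{nota:gp} may be pushed down into the middle slice $\R^3 \times \{0\}$, where it becomes a concrete loop on the $n$th component of $L_S$ based at $x_n$, built from the two branches meeting at $p_k$. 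The associated positive element $g_{p_k}\in \nR F^n_{n-1}$ can then be read off directly from the link diagram of $L_S$ by standard Wirtinger-type computations.

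The heart of the proof is to show that, in $\nR F^n_{n-1}$,
\[ g_{p_1} = x_1 \qquad \text{and} \qquad g_{p_2} = x_1 \cdot c_{n-1}, \]
where $c_{n-1}$ is an iterated commutator of the form $[x_2, [x_3, \cdots [x_{n-2}, x_{n-1}] \cdots ]]$ (up to the natural order-dependent variants dictated by the figure). This identification is the main obstacle: it requires carefully tracking the two loops $\gamma_{p_1}, \gamma_{p_2}$ through the slicing in Figure \ref{fig:stirlink}, and recognizing $L_S\cup \gamma_{p_2}$ as a Milnor-type configuration encoding $c_{n-1}$, much as $L\cup\gamma_n$ was for $Y[n]$. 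Both elements are positive, since their reduced Magnus expansions begin with $X_1$ of coefficient $+1$.

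Granted this, the rest is bookkeeping. On the one hand, applying Corollary \ref{th:main3} with $I=(1)$ yields directly
\[ \mathcal{K}_{\mathcal{S}[n]}(1;n) = \bigl\{\, (\ve(p_1), 1),\ (\ve(p_2), 1)\, \bigr\} = \{(-1,1);(1,1)\}, \]
since the coefficient of $X_1$ equals $1$ for both $E_n(g_{p_1})$ and $E_n(g_{p_2})$. On the other hand, expanding
\[ S_n(\mathcal{S}[n]) = -\bigl(x_1 - 1\bigr) + \bigl(x_1 c_{n-1} - 1\bigr) = x_1\bigl(c_{n-1} - 1\bigr) \]
and computing $E_n$, one checks that all coefficients $\kappa_{E_n(\mathcal{S}[n])}(J)$ for $J$ a non-repeating sequence in $\{1,\ldots,n-1\}$ of length at most $n-2$ vanish (so the indeterminacy $D_{E_n(\mathcal{S}[n])}(1\cdots n-1)$ is zero), while the top-degree term $X_1X_2\cdots X_{n-1}$ has coefficient $-1$. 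This gives $\widetilde{\kappa}_{\mathcal{S}[n]}(1\cdots n{-}1;n) = -1$, and thus $\mathcal{S}[n]$ is not link-homotopically trivial by Theorems \ref{th:main} and \ref{th:main0}.
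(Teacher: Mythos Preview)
Your overall approach mirrors the paper's, but your proposed formula for $g_{p_2}$ is wrong in a way that breaks the computation. You write $g_{p_2} = x_1 \cdot c_{n-1}$ as a \emph{product}; the correct expression (and the one the paper obtains) is the \emph{conjugate} $g_{p_2} = x_1^{c}$, with $c = [x_2,[x_3,\ldots,[x_{n-2},x_{n-1}]]]$ and signs $\ve(p_1)=+1$, $\ve(p_2)=-1$. This is not cosmetic. With your formula,
\[
S_n(\mathcal{S}[n]) = x_1(c_{n-1}-1), \qquad E_n\bigl(S_n(\mathcal{S}[n])\bigr) = (1+X_1)\bigl(E_n(c_{n-1})-1\bigr),
\]
and since $E_n(c_{n-1})-1$ is a nonzero sum of degree-$(n{-}2)$ monomials (for instance $X_2X_3\cdots X_{n-1}$ with coefficient $+1$), your assertion that all $\kappa(J;n)$ vanish for $|J|\le n{-}2$ is simply false; moreover the coefficient of $X_1X_2\cdots X_{n-1}$ then comes out $+1$, not $-1$.

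With the conjugate form one has instead $E_n(x_1^c)=1+X_1+X_1U-UX_1$ where $U=E_n(c)-1$, so that
\[
E_n\bigl(S_n(\mathcal{S}[n])\bigr) = (X_1) - (X_1 + X_1U - UX_1) = UX_1 - X_1U,
\]
which is purely of degree $n{-}1$; the low-order vanishing and $\tilde{\kappa}(1\cdots n{-}1;n)=-1$ then follow. Your computation of $\mathcal{K}_{\mathcal{S}[n]}(1;n)$ happens to survive the error, since both $x_1$ and $x_1^c$ (or $x_1c$) have $X_1$-coefficient equal to $1$; but note that your sign convention $\ve(p_1)=-1$, $\ve(p_2)=+1$ is also opposite to the paper's, and with the correct conjugate formula that choice would flip the sign of $\tilde{\kappa}(1\cdots n{-}1;n)$ as well.
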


\begin{remark}
Lemma \ref{lem:stirlink} also implies that $S[n]$ is not link-homotopic to the link map $Y[n]$ of Lemma \ref{prop:real}, for any $n\ge 3$.  
\end{remark}

\begin{proof}
We have $\varepsilon(p_1)=-\varepsilon(p_2)=+1$.  
Pick basepoints  $x_i\in \R^3\times\{1\}$ ($1=1,\cdots,n$) as shown on the right-hand side of Figure \ref{fig:stirlink}, and consider the loops $\gamma_1$ and $\gamma_2$ represented in Figure \ref{fig:stirlink2}, based at $x_n$ and passing through $p_1$ and $p_2$, respectively, while changing branches.  
 \begin{figure}
  \includegraphics[scale=0.75]{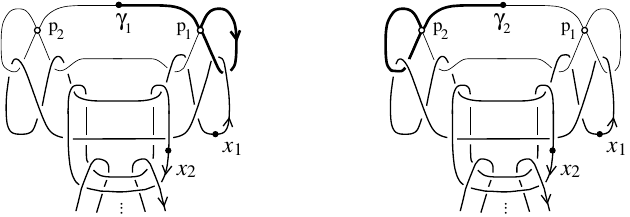}
   \caption{The based loops $\gamma_1$ and $\gamma_2$ on the $n$th component of $\mathcal{S}[n]$}\label{fig:stirlink2}
\end{figure} 
Let $\tilde{S}$ denote the $(n-1)$-component link map obtained from $\mathcal{S}[n]$ by deleting the $n$th component; 
 by Lemma \ref{lem:isor} we have $\nR G(\tilde{S})\simeq \nR F^n_{n-1}$, 
where $F^n_{n-1}$ is the free group generated by $x_1,\cdots,x_{n-1}$. 
The $(n-1)$-component link  $L:= \tilde{S}\cap (\R^3\times \{-1\})$ is isotopic to the trivial link, and we thus have 
$\nR G(\tilde{S})\simeq \nR G(L)\simeq \nR F^n_{n-1}$, 
 where $G(L)$ denotes the fundamental group of $(\R^3\times \{-1\})\setminus L$.
As in the proof of Lemma \ref{prop:real}, we can thus express the unique positive element $g_i\in \nR G(\tilde{S})$ representing the loop $\gamma_i$, for $i=1,2$ as
$$ g_{1} =x_{1}\,\, \textrm{ and } \,\, g_{2} = x_{1}^c\textrm{, where }c:=
\left[x_{2},\left[x_{3}, \cdots \left[x_{n-3},[x_{n-2},x_{n-1}]\right]\cdots \right]\right].$$
It follows that $$S_n(\mathcal{S}[n]) = (x_{1}-1) - (x_{1}^c-1).$$ 
The computation for the invariant $\tilde{\kappa}\left(1\cdots n-1;n\right)$ follows from the fact that 
$$ E_n(c)= 1 + X_2X_3\cdots X_{n-2} + \textrm{other terms of degree $n-2$}. $$
The two positive elements contributing to $S_n(\mathcal{S}[n])$ are $x_{1}$ and $x_{1}^c$, with coefficient $+1$ and $-1$, respectively. 
Since $E_n(x_{1})=1+X_{1}$ and 
$E_n(x_{1}^c)=1+X_{1}-(E_n(c)-1)X_{1} + X_{1}(E_n(c)-1)$, we 
obtain the desired values for $\mathcal{K}_{\mathcal{S}[n]}\left(1;n\right)$. 
\end{proof}

\subsection{Comparing the invariants}\label{sec:compare}

In this section, we provide examples showing that the
  link-homotopy invariants defined in Sections \ref{sec:refine} and \ref{sec:kappa}, 
  both detect link maps that the others do not. 

Let us first consider the $n$-component link map $\mathcal{S}^i[n]$, obtained from the link map  $\mathcal{S}[n]$ of Section \ref{sec:stirling} 
by reversing the orientation of the $i$th component, for any $i$ such that $1<i<n$.

\begin{lemma}\label{lem:compare}
The invariant $\widetilde{\kappa}(1\cdots n-1;n)$ distinguishes the two link maps $\mathcal{S}^i[n]$ and  $\mathcal{S}[n]$ up to link-homotopy. 
However, none of the invariants $\mathcal{K}_n$ or $\mathcal{K}(I;n)$ for any non-repeated sequence $I$ in $\{1,\cdots,n-1\}$, distinguishes these two link maps. 
\end{lemma}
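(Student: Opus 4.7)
The plan is to imitate the proof of Lemma \ref{lem:stirlink} for $\mathcal{S}^i[n]$ and directly compare. Since $p_1$ and $p_2$ are self-singularities on the $1$st and $n$th components, reversing the orientation of the $i$th component does not affect their signs, so $\varepsilon(p_1)=+1$ and $\varepsilon(p_2)=-1$ still hold for $\mathcal{S}^i[n]$. On the algebraic side, the only change is that a positive $i$th meridian for $\mathcal{S}^i[n]$ is now the inverse of that for $\mathcal{S}[n]$, so the basing-induced identification $\nR G(\widetilde{\mathcal{S}}) \simeq \nR F^n_{n-1}$ intertwines by the substitution $x_i \mapsto x_i^{-1}$ in any word. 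Denoting by $c^i$ the result of this substitution applied to $c$, the loop $\gamma_2$ represents $x_1^{c^i}$ in the new basing, which is still positive (its leading Magnus coefficient is $+1$ at $X_1$). Hence
$$ S_n(\mathcal{S}^i[n]) = (x_1-1) - (x_1^{c^i}-1). $$

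For the first statement, I would use that $E_n(x_i^{-1}) = 1 - X_i$, so $E_n(c^i)$ is obtained from $E_n(c)$ by substituting $X_i \mapsto -X_i$. Since $1<i<n$, the variable $X_i$ appears exactly once in the monomial $X_2 X_3 \cdots X_{n-1}$, so its coefficient in $E_n(c^i)$ is $-1$ instead of $+1$. Carrying this sign through the final computation of Lemma \ref{lem:stirlink} yields $\widetilde{\kappa}_{\mathcal{S}^i[n]}(1\cdots n-1;n) = +1 \neq -1 = \widetilde{\kappa}_{\mathcal{S}[n]}(1\cdots n-1;n)$.

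For the second statement, the key observation is that for each positive element $g \in \{x_1, x_1^c, x_1^{c^i}\}$ contributing to the two sums, $E_n(\phi(g))$ has coefficient $1$ at $X_1$. Hence, for every non-repeated sequence $I$ of length at least $2$, the proper subsequence $(1)$ already contributes $1$ to the indeterminacy, forcing $D_{E_n(\phi(g))}(I)=1$ and thus $\widetilde{\kappa}(I;g)=0$. The only surviving residue is $\widetilde{\kappa}((1);g) = 1$. Therefore both link maps yield $\mathcal{K}(1;n)=\{(1,1),(-1,1)\}$, $\mathcal{K}(I;n)=\{(1,0),(-1,0)\}$ for $|I|\geq 2$, and $\mathcal{K}_n = \{(1, X_1),(-1, X_1)\}$.

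The conceptual point—rather than a technical obstacle—is the interplay between the degree-$1$ indeterminacy and the degree-$(n-1)$ signal. The $\mathcal{K}$-type invariants examine each positive element independently, where the large $X_1$-coefficient \textit{swamps} every higher-degree residue, while the global invariant $\widetilde{\kappa}(\cdot;n)$ sees the combination $X_1 C - C X_1$ in $E_n(S_n(\mathcal{S}[n]))$ with $C=E_n(c)-1$, whose degree-$1$ part vanishes; this is precisely what allows it to detect the sign flip occurring in the top-degree coefficient of $C$ when $c$ is deformed to $c^i$.
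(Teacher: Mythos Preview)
Your proof is correct and follows essentially the same approach as the paper: both reduce the orientation reversal on component $i$ to the substitution $x_i\mapsto x_i^{-1}$ in the commutator $c$, observe that this flips the sign of the leading coefficient of $E_n(c)-1$ and hence of $\widetilde\kappa(1\cdots n-1;n)$, and then verify that the per-element residues $\widetilde\kappa(I;g)$ are killed by the degree-$1$ coefficient at $X_1$. One small omission: your indeterminacy argument for $\mathcal K(I;n)$ with $|I|\ge 2$ invokes the subsequence $(1)$, which only applies when $1\in I$; for sequences $I$ not containing $1$ you should note separately that every monomial of $E_n(x_1^{c})-1$ and $E_n(x_1^{c^i})-1$ contains $X_1$, so those coefficients (and their residues) vanish outright.
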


\begin{proof}
We only have to show how changing the orientation on component $i$, affects the computations in the proof of Lemma \ref{lem:stirlink}; we shall freely use the notation used in this latter proof.
Let $\tilde{S}^i$ denote the $(n-1)$-component link map obtained from $\mathcal{S}^i[n]$ by deleting the $i$th component, so that  $\nR G(\tilde{S}^i)\simeq \nR F^i_{n-1}$.
For $i=1,2$, the positive element $g'_i\in \nR G(\tilde{S}^i)$ representing the loop $\gamma_i$ of Figure \ref{fig:stirlink2} (with the orientation on the $i$th component reversed), are given by 
 $$g'_1 =x_{1}\, \textrm{ and }\,g'_{2} = x_{1}^{c'}\textrm{, where }c':=
\left[x_{2},\left[ \cdots ,\left[x_{i}^{-1}, \cdots ,[x_{n-2},x_{n-1}]\right]\cdots \right]\right].$$
It follows that $S_n(\mathcal{S}^i[n]) = (x_{1}-1) - (x_{1}^{c'}-1)$.

Since $ E_n(c')= 1 - X_2X_3\cdots X_{n-2} + \textrm{other terms of degree $n-2$}$, 
we have 
$$ \widetilde{\kappa}_{\mathcal{S}^i[n]}(12\cdots n-1;n) = 1 = -\widetilde{\kappa}_{\mathcal{S}[n]}(12\cdots n-1;n). $$
On the other hand, we get 
$$ \mathcal{K}_3(\mathcal{S}^i[n]) = \left\{ (1,X_1);(-1,X_1) \right\} = \mathcal{K}_3(\mathcal{S}[n]),$$
hence 
$\mathcal{K}_{\mathcal{S}^i[n]}(I;n)=\mathcal{K}_{\mathcal{S}[n]}(I;n)$ for any non-repeated sequence $I$ in $\{1,\cdots,n-1\}$
\end{proof}

Next, let us consider a slightly modified version of the $3$-component link map $Y[3]$ used in Section \ref{sec:real}. 

Let $Y:=K_1\cup K_2\cup K_3$ be the link map in $\R^4$ obtained by Jin-Kirk suspension on the $3$-component link in $\R\times \{0\}$ depicted in Figure \ref{fig:link3}. 
\begin{figure}
 \includegraphics[scale=0.59]{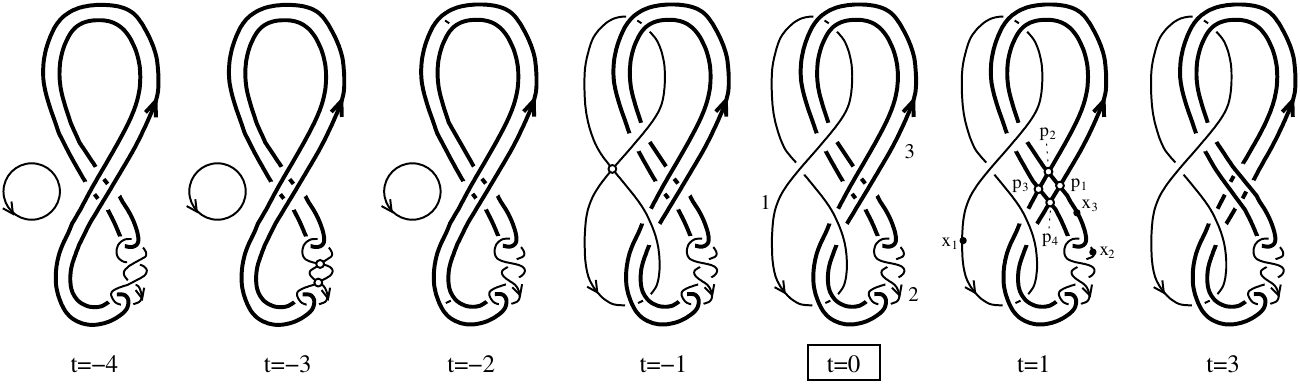}
 \caption{The $3$-component link map $Y$}\label{fig:link3}
\end{figure}
More precisely, take the trace of a link-homotopy on component $3$, that creates $4$ singularities, all lying in $\R^3\times \{1\}$; the resulting link in $\R^3\times \{2\}$ being isotopic to the trivial link, we can cap it off in $\R^3\times \{t\}$ for some $t>4$. 
Consider likewise the trace of a link-homotopy on component $1$, introducing a singularity in $\R^3\times \{-1\}$: the resulting link is isotopic to a split link, which we may assume lies in $\R^3\times \{-2\}$; see Figure \ref{fig:link3}.  Next, take the trace of a link-homotopy on component $2$ which creates $2$ singularities, both lying in $\R^3\times \{-3\}$, and resulting in a trivial link that we can cap off in $\R^3\times \{t\}$ for some $t<-4$. 

We shall consider invariants extracted from the third component of $Y$, and  thus pick basepoints $x_i$ on $Y\cap \left(\R^3\times \{1\} \right)$, as shown in the figure ($i=1,2,3$).  
We also denote by $p_i$ ($i=1,2,3,4$) the four singular points as illustrated there; note that $\varepsilon(p_1)=\varepsilon(p_3)=-\varepsilon(p_2)=-\varepsilon(p_4)=1$. 

\begin{lemma}\label{lem:Y1}
We have $\tilde{\kappa}_Y(I;3)=0$ for any non-repeated sequence $I$. 
However, 
$$ \textrm{$\mathcal{K}_Y(1;3)=\{(1,1);(-1,1);(1;1);(-1,1)\}$\, and\, $\mathcal{K}_Y(2;3)=\{(1,-1);(1,1)\}$.} $$ 
In particular, $Y$ is not link-homotopically trivial. 
\end{lemma}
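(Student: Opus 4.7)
The plan is a direct application of the definitions of Sections \ref{sec:kappa} and \ref{sec:refine}. First, using the basepoints $x_1, x_2, x_3 \in \R^3 \times \{1\}$ as a basing, Lemma \ref{lem:isor} identifies $\nR G(L_3)$ with the reduced free group $\nR F^3_2$ on two generators $x_1, x_2$. This group is Heisenberg-like: its center is generated by $z := [x_1, x_2]$, and every element has a unique normal form $x_1^a x_2^b z^c$. A direct computation gives its reduced Magnus expansion in $\Lambda_3$ as
\[ E_3(x_1^a x_2^b z^c) = 1 + a X_1 + b X_2 + (ab + c)\, X_1 X_2 - c\, X_2 X_1, \]
and an element is positive precisely when the tuple $(a, b, ab+c, -c)$ is lexicographically non-negative. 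I would work with this normal form throughout.

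Next, I would identify the positive element $g_{p_i} \in \nR G(L_3)^+$ associated to each singular point $p_i$ of $K_3$. Since no link-homotopy event on $L_3 = K_1 \cup K_2$ occurs between $t = 0$ and $t = 1$, the slice $\R^3 \times \{1\}$ presents $L_3$ as the first two components of the link in Figure \ref{fig:link3}. For each $p_i$, I would form the loop $\gamma_{p_i}$ based at $x_3$ by following both branches of $K_3$ through $p_i$ within this slice; reading off its transverse intersections with $L_3$ from the diagram produces a word $w_i \in \nR F^3_2$, and $g_{p_i}$ is then $w_i$ or $w_i^{-1}$ according to Remark \ref{rem:orient}. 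With the four triples $(a_i, b_i, c_i)$ in hand, I would assemble
\[ S_3(Y) = \sum_{i=1}^{4} \varepsilon(p_i)(\phi(g_{p_i}) - 1) \]
with signs $\varepsilon(p_1) = \varepsilon(p_3) = +1$ and $\varepsilon(p_2) = \varepsilon(p_4) = -1$, apply $E_3$, and verify that each of the four coefficients (of $X_1, X_2, X_1 X_2, X_2 X_1$) vanishes, yielding $\tilde{\kappa}_Y(I; 3) = 0$ for every non-repeating $I$. Indeed, the indeterminacy for a length-one sequence is zero, so the vanishing for $|I| = 1$ is just the vanishing of the actual coefficient; the length-two indeterminacy then becomes zero as well, and so the coefficients of $X_1 X_2$ and $X_2 X_1$ must also vanish.

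Finally, the stated values of $\mathcal{K}_Y(1; 3)$ and $\mathcal{K}_Y(2; 3)$ would follow by organizing the four contributions according to the underlying positive elements: each distinct $g_{p_i}$ contributes a pair $(\varepsilon(p_i), \tilde{\kappa}_Y(I; g_{p_i}))$ per Corollary \ref{th:main3}, and the non-emptiness of either multiset certifies that $Y$ is not link-homotopically trivial. The main obstacle will be the explicit identification, from Figure \ref{fig:link3}, of the four positive elements $g_{p_i}$: this requires careful bookkeeping of how the two branches of $K_3$ at each $p_i$ thread through the meridional disks of $K_1$ and $K_2$, together with the orientation data needed to extract the positive representative. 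Once the four elements are in hand, all remaining Magnus computations and multiset organization are routine, and the cancellations underlying the vanishing of $\tilde{\kappa}_Y(I; 3)$ appear as consequences of the specific pairings built into the construction of $Y$.
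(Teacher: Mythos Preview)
Your proposal is correct and follows essentially the same route as the paper: identify $\nR G(L_3)\simeq \nR F^3_2$ via the chosen basing, read off the four positive elements $g_{p_i}$ from the slice $\R^3\times\{1\}$ (where $K_1\cup K_2$ is a trivial $2$-component link), compute their reduced Magnus expansions, and observe the signed sum cancels while the individual contributions survive in the multisets $\mathcal{K}_Y(I;3)$. The paper carries out exactly this, supplying the explicit words $g_1=x_1x_2^{-1}$, $g_2=x_1$, $g_3=x_2x_1$, $g_4=x_2x_1x_2^{-1}$ via an auxiliary figure of the four based loops; your normal-form bookkeeping is an equivalent way to organize the same computation.
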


\begin{proof}
Let us first compute all $\tilde{\kappa}$ invariants extracted from the third component of $Y$. 
Let $\tilde{Y}:=Y\setminus K_3$, and denote by $G(\tilde{Y})$ the fundamental group of its exterior. By Lemma \ref{lem:isor}, we have $\nR G(\tilde{Y})\simeq \nR F(x_1,x_2)$, where $F(x_1,x_2)$ is the free group generated by $x_1$ and $x_2$. 
Pick the four based loops $\gamma_i$ ($i=1,2,3,4$) as shown in Figure \ref{fig:link3_loops}: each $\gamma_i$ sits on $Y\cap \left(\R^3\times \{1\} \right)$, and passes through the singular point $p_i$ while changing branches. 
The loop orientations shown in the figure, is the orientation induced by positivity of the associated elements of $\nR G(\tilde{Y})$, see Remark \ref{rem:orient}. 
\begin{figure}
 \includegraphics[scale=0.65]{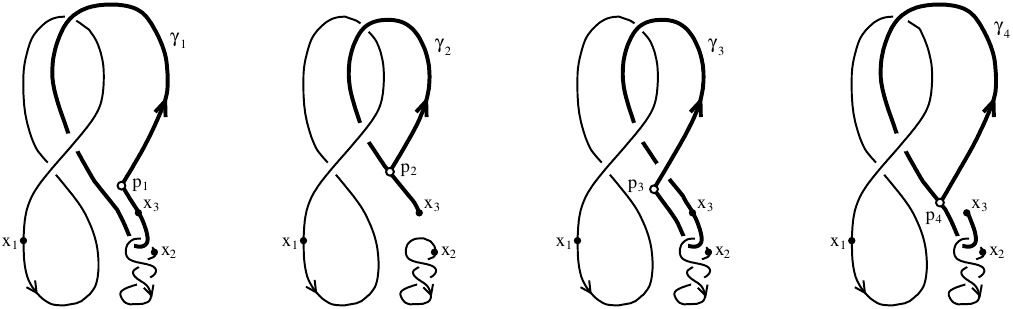}
 \caption{The based loops $\gamma_i$ ($i=1,2,3,4$)}\label{fig:link3_loops}
\end{figure}
Using the fact that the $2$-component link $\tilde{Y}\cap  \left(\R^3\times \{1\} \right)$ is isotopic to the trivial link, we  can proceed as in the proof of Proposition \ref{prop:real}, and use  Figure \ref{fig:link3_loops} to express the unique positive element $g_{i}$ of $\nR G(\tilde{Y})$ represented by the loop $\gamma_i$, for each value of $i$, as follows: 
$$ g_1=x_1x_2^{-1}\qquad ; \qquad g_2=x_1\qquad ; \qquad g_3=x_2x_1 \qquad ; \qquad g_4=x_2x_1x_2^{-1}. $$
Hence we have 
$$S_3(Y) = (x_1x_2^{-1}-1)  - (x_1-1) + (x_2x_1 - 1) - (x_2 x_1x_2^{-1} -1). $$
Taking the reduced Magnus expansion $E_3$ of each of the above four positive elements, gives 
\begin{eqnarray*}
 E_3(g_1)=1+X_1-X_2-X_1X_2 & ; & E_3(g_2)=1+X_1\\\
 E_3(g_3)=1+X_1+X_2+X_2X_1 & ; & E_3(g_4)=1+X_1+X_2X_1-X_1X_2.
\end{eqnarray*}
 It follows that $E_3(S_3(Y)) = 0$, so that $\tilde{\kappa}_Y(1;3)=\tilde{\kappa}_Y(2;3)=\tilde{\kappa}_Y(12;3)=\tilde{\kappa}_Y(21;3)=0$. 
On the other hand, the above computations of $E_3(g_i)$ ($i=1,2,3,4$) directly provide the values of $\mathcal{K}_Y(1;3)$ and $\mathcal{K}_Y(2;3)$. 
\end{proof}

\section{Closing remarks}

\subsection{Computing higher order Kirk invariants from cross sections}\label{sec:compute}

The techniques used in Section \ref{sec:ex} to compute our various examples from cross sections, 
can also be applied in a  more general setting, as follows. 

Let $f:S_1^2\cup\cdots \cup S_n^2\rightarrow \mathbb{R}^4$ be a link map and set $L=f(S_1^2\cup\cdots \cup S_n^2)$.
We may assume that all  singularities of $f$  lie in $\mathbb{R}^3\times\{0\}$, and that for each $i$ $(i=1,2,\ldots,n)$, $f^{-1}(L\cap (\mathbb{R}^3\times\{0\}))\cap S_i^2$ is an
equator $S_i^1$ of $S_i^2$. 
More precisely, we can first  deform $f$ so that all singularities and all saddle points of $f$ lie in $\mathbb{R}^3\times\{0\}$, 
all maximal points lie in $\mathbb{R}^3\times \{2\}$  
and all minimal points lie in $\mathbb{R}^3\times \{-2\}$. Then we can move 
some saddle points into $\mathbb{R}^3\times \{-1\}$ and the others into $\mathbb{R}^3\times \{1\}$, so that 
the resulting link map is of the desired form.
Then $l:=f(S_1^1\cup\cdots\cup S_n^1)$ is a self-singular link in $\mathbb{R}^3\times\{0\}$.\footnote{Here, a self-singular link in $3$-space is an immersion of circles whose singular set consists of finitely many transverse double points, each involving two strands of a same component.} 
We may further assume that $L$ has an \quote{almost product structure} in a neighborhood of the slice $\mathbb{R}^3\times \{0\}$, that is, for a sufficient small $\varepsilon>0$, we have 
\[L\cap (\mathbb{R}^3\times\{t\}))=\left\{
\begin{array}{ll}
l_+ & \textrm{, for $0<t\leq\varepsilon$}\\
l & \textrm{, for $t=0$}\\
l_- & \textrm{, for $-\varepsilon\leq t<0$,}
\end{array}
\right.\]
where $l_{\pm}:=L\cap(\mathbb{R}^3\times\{\pm\varepsilon\})$. 
We note that $l_+$ and $l_-$ are slice links, which bound slice disks
$D_+=L\cap \left(\mathbb{R}^3\times[\varepsilon,\infty)\right)$ and 
$D_-=L\cap \left(\mathbb{R}^3\times(-\infty,-\varepsilon]\right)$, respectively.
Summarizing, any link-map can be decomposed, up to deformation, into 
self-singular annuli $(l_-\times[-\varepsilon,0))\cup l\cup( l_+\times (0,\varepsilon])$ as above, capped off by 
slice disks $D_{\pm}$ for the two slice links $l_{\pm}$.

Conversely, one can construct a link-map $L$ from the following objects: 
\begin{defi}
 A \emph{slice self-singular link} is a self-singular link $l$ endowed with a sign on each singular point, such that the link $l_+$ (resp. $l_-$) obtained from $l$ by replacing each singularity 
by a classical crossing with the given sign (resp. with the opposite sign), is a slice link. 
\end{defi}

Indeed, we have the following general fact, which is a consequence of \cite[Thm.~6.10]{AMY} and \cite[Thm.~3.14]{MY7} (see \cite[Cor.~3.5]{AMY_w} for a proof). 
\begin{claim}\label{claim:slice}
Up to link-homotopy, a slice link bounds a unique union of slice disks.
\end{claim}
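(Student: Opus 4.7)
The plan is to reduce the uniqueness claim to a link-homotopy triviality claim via doubling, and then to invoke the two cited theorems. Let $L \subset S^3 = \partial B^4$ be a slice link with two slice disk systems $D$ and $D'$ in $B^4$; the goal is to produce a link-homotopy between $D$ and $D'$ rel $L$.

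First, I would form the 2-sphere link map $\Sigma := D \cup_L (-D') \subset S^4$, obtained by gluing $D$ and $-D'$ (the latter with reversed orientation, living in a mirror copy of $B^4$) along their common boundary $L$ across the equatorial $S^3 \subset S^4$. Each component of $\Sigma$ is an embedded $S^2$. The claim then becomes equivalent to saying that $\Sigma$ is link-homotopically trivial \emph{through a link-homotopy that can be made transverse to the equator}, since such a transverse link-homotopy restricts on the equatorial slice to the desired link-homotopy between the two slice disk systems.

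Next, I would apply \cite[Thm.~6.10]{AMY} to establish link-homotopy triviality of $\Sigma$. The doubling construction is symmetric in that $\Sigma$ is invariant under the $\Z/2$-action that swaps the two copies of $B^4$ and reverses orientation; the link-homotopy invariants furnished by that theorem should therefore vanish on $\Sigma$ via a pairing/cancellation between the contributions from $D$ and those from $-D'$. Then \cite[Thm.~3.14]{MY7} would be used to promote this triviality of the closed system $\Sigma$ into a uniqueness statement for slice disks rel boundary, via the correspondence between link maps and slice self-singular links sketched in Section \ref{sec:compute}.

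The main obstacle lies in the cancellation step: showing rigorously that all the link-homotopy invariants of $\Sigma$ vanish requires carefully tracking how they transform under orientation-reversal and boundary identification, which may in practice call for a combinatorial argument using the cut-diagram framework of Section \ref{sec:cutd}. Once this vanishing is established, the final transversality-and-restriction step recovering the link-homotopy between $D$ and $D'$ should follow from a standard general position argument applied to the trivializing link-homotopy of $\Sigma$.
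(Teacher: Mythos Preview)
The paper does not supply its own argument here; it only cites \cite[Thm.~6.10]{AMY} and \cite[Thm.~3.14]{MY7}, and refers to \cite[Cor.~3.5]{AMY\_w} for an actual proof. So there is no detailed proof in the paper to compare against. That said, your proposal has a genuine gap that is worth flagging.

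Your central reduction --- that uniqueness of slice disks is \emph{equivalent} to link-homotopy triviality of the doubled sphere link $\Sigma = D \cup_L (-D')$ through a homotopy ``transverse to the equator'' --- does not hold in the direction you need. Even granting that $\Sigma$ is link-homotopically trivial and that the trivializing homotopy is generic with respect to the equatorial $S^3$, the cross-section at each time is merely \emph{some} link, not the fixed link $L$; nothing forces the equatorial slice to remain $L$ throughout. What you would actually need is a link-homotopy of $\Sigma$ that is the identity on $L$ at all times, and extracting such a rel-boundary homotopy from an arbitrary trivialization of the closed object is precisely the content of the claim you are trying to prove. The easy implication goes the other way: if $D$ and $D'$ are link-homotopic rel $L$, then $\Sigma$ is link-homotopic to $D'\cup_L(-D')$, which is visibly trivial. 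Your symmetry argument for the vanishing of invariants of $\Sigma$ is also only heuristic: orientation reversal on one hemisphere does not obviously force cancellation of higher-order (non-abelian) Milnor-type data.

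The route taken by the cited references is more direct and avoids doubling entirely: \cite[Thm.~6.10]{AMY} gives a link-homotopy classification of (ribbon) surface links in terms of Milnor-type invariants, and for \emph{disk} links these invariants are determined by the boundary link in $S^3$ (this is where \cite[Thm.~3.14]{MY7} enters). Since $D$ and $D'$ share the boundary $L$, their invariants agree and the classification yields the desired link-homotopy. If you want to salvage your approach, you would need an independent ``half-lives, half-dies'' type statement that upgrades triviality of $\Sigma$ to a rel-$L$ link-homotopy of the hemispheres; absent such a lemma, the doubling strategy does not close.
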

It follows that we have the following. 
\begin{prop}
The map $l\mapsto L$ induces a well-defined surjective map, from the set of slice self-singular links, 
to the set of link-homotopy classes of link-maps.
\end{prop}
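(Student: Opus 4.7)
The plan is to establish the two required properties, well-definedness and surjectivity, separately, relying essentially on the decomposition discussion preceding the statement and on Claim \ref{claim:slice}.

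For well-definedness, the construction $l \mapsto L$ depends on the following choices: (i) a realization of the self-singular annuli $l \times [-\varepsilon,\varepsilon]$ around the slice $\mathbb{R}^3\times\{0\}$, and (ii) a choice of slice disks $D_+$ for $l_+$ in $\mathbb{R}^3 \times [\varepsilon,\infty)$ and $D_-$ for $l_-$ in $\mathbb{R}^3 \times (-\infty,-\varepsilon]$. Choice (i) is unique up to ambient isotopy by the almost product structure; so the core of the argument is to show that different choices of $D_\pm$ yield link-homotopic link maps. By Claim \ref{claim:slice}, any two choices of slice disks for $l_+$ are link-homotopic, and likewise for $l_-$; we then argue that such a link-homotopy of slice disks in the upper half space (rel.\ boundary) extends by the identity across $\mathbb{R}^3\times[-\varepsilon,\varepsilon]$ and the lower slice disks $D_-$, giving a link-homotopy between the two resulting link maps. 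Applying this argument first to $D_+$ and then to $D_-$ yields the desired well-definedness.

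For surjectivity, we use the decomposition result established just before the proposition: given any link map $f$, one can deform it so that all singular and saddle points lie in $\mathbb{R}^3\times\{0\}$, all saddle points are pushed to $\mathbb{R}^3\times\{\pm 1\}$, and all extrema lie in $\mathbb{R}^3\times\{\pm 2\}$, with an almost product structure around $\mathbb{R}^3\times\{0\}$. The cross section $l := L \cap (\mathbb{R}^3\times\{0\})$ is then a self-singular link, decorated with signs inherited from the over/under information at each double point. By construction, the two resolutions $l_\pm$ coincide with the cross sections $L\cap(\mathbb{R}^3\times\{\pm\varepsilon\})$, which bound the embedded disks $D_\pm := L \cap (\mathbb{R}^3\times[\pm\varepsilon,\pm\infty))$; in particular $l_\pm$ are slice, so $l$ is a slice self-singular link whose associated link map is $L$ itself.

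The main obstacle is really just the verification in the well-definedness step: one must check that a link-homotopy of slice disks bounding $l_+$, a priori defined only in the upper half space, can indeed be realized as a link-homotopy of the whole link map without disturbing the singular annulus across $\mathbb{R}^3 \times \{0\}$ nor the lower slice disks. This should follow formally from the fact that the link-homotopy of slice disks in Claim \ref{claim:slice} is rel.\ boundary, so it extends by the identity across the almost product region $l_+ \times [0,\varepsilon]$; the only subtlety is to ensure that the extended homotopy remains a link-homotopy, i.e.\ does not create intersections between distinct components. This follows since the homotopy is supported in the upper half space, where components are disjoint throughout.
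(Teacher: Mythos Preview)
Your proof is correct and follows exactly the approach the paper intends. The paper does not give an explicit proof of this proposition; it simply writes ``It follows that we have the following'' after Claim \ref{claim:slice}, treating the result as an immediate consequence of that claim together with the decomposition discussion preceding it. Your argument spells out precisely those implicit details: surjectivity from the decomposition, and well-definedness from Claim \ref{claim:slice} applied separately to $D_+$ and $D_-$, with the link-homotopy extended by the identity elsewhere.
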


Moreover, in the above situation, $S_i(L)$ can be calculated from the slice self-singular link $l$ as follows. 
Set $l_i:=l\setminus(\text{$i$th component of $l$})$ 
and $G(l)_i:=\pi_1((\mathbb{R}^3\times\{0\})\setminus l_i)$.
Suppose that all basepoints of $L$ lie on $l$, and that all meridians of $L$ are in  
$\mathbb{R}^3\times\{0\}$.  
Then we have the following: 
\begin{lemma}\label{lem:tek}
The inclusion $(\mathbb{R}^3\times\{0\})\setminus l\hookrightarrow R^4\setminus L$, induces
an isomorphism from $RG(l)_i$ to $RG(L)_i$. 
\end{lemma}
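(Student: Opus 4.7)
The plan is to show that both $\nR G(l)_i$ and $\nR G(L)_i$ are canonically isomorphic to the reduced free group $\nR F^i_{n-1}$ via meridian-preserving identifications, and that the inclusion-induced map intertwines these identifications, hence is an isomorphism.

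First, I would verify that the inclusion induces a well-defined homomorphism $\varphi\colon \nR G(l)_i\to \nR G(L)_i$. The inclusion of complements gives a homomorphism at the level of fundamental groups, and since by assumption the meridians of $L$ all lie in the slice $\mathbb{R}^3\times\{0\}$, each $j$th meridian of $l_i$ (with $j\neq i$) is sent to the corresponding $j$th meridian of $L_i$. The defining reduced relations $[m_j,m_j^g]=1$ are preserved along such a meridian-preserving homomorphism, so this descends to a well-defined $\varphi$ on reduced groups that sends $j$th meridians to $j$th meridians.

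The target $\nR G(L)_i$ is identified with $\nR F^i_{n-1}$ by Lemma \ref{lem:isor}, via a meridian-preserving isomorphism. For the source, an analogous identification arises from the cut-diagram framework of \cite{AMY} applied in dimension one: a generic projection of $l_i$ to $\mathbb{R}^2$ gives rise to a $1$-dimensional cut-diagram over a disjoint union of circles whose reduced cut-diagram group agrees with $\nR G(l)_i$, and the $1$-dimensional analogue of \cite[Thm.~5.18]{AMY} produces an isomorphism $\nR G(l)_i\simeq \nR F^i_{n-1}$, again sending each $j$th meridian to the generator $x_j$. Conjugating $\varphi$ with these two isomorphisms yields a meridian-preserving self-map of $\nR F^i_{n-1}$, which is necessarily the identity. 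Hence $\varphi$ is an isomorphism.

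The main obstacle is substantiating the identification $\nR G(l)_i\simeq \nR F^i_{n-1}$ in the self-singular setting, since Milnor's classical theorem is stated for genuine links rather than self-singular ones. A direct route is to observe that a Wirtinger-style presentation for $\pi_1((\mathbb{R}^3\times\{0\})\setminus l_i)$ differs from that of any over/under resolution $(l_i)_\pm$ only by Wirtinger relations at the singular points, which involve two meridians of the same link component and therefore hold tautologically in the reduced quotient; this reduces the claim to the classical Milnor result. Alternatively, the cut-diagram argument of \cite{AMY} transposes verbatim to dimension one and yields the identification directly.
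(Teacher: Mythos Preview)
Your argument is correct. The paper does not actually give a proof here: it only remarks that the lemma is a ``singular'' version of the combinatorial Stallings theorem of \cite[Cor.~5.21]{AMY} and that it can be shown by the same method as \cite[Thm.~5.20]{AMY}, i.e.\ by a direct cut-diagram comparison between the $1$-dimensional cut-diagram of the cross-section and the $2$-dimensional cut-diagram of $L$.

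Your route is genuinely different and in some sense more economical. Rather than reproducing the cut-diagram argument of \cite[Thm.~5.20]{AMY}, you factor through the already-available identifications with the reduced free group: Lemma~\ref{lem:isor} for the target $\nR G(L)_i$, and the classical Milnor/Habegger--Lin statement (extended to self-singular links by your observation that singular-point Wirtinger relations die in the reduced quotient) for the source $\nR G(l)_i$. Since both identifications, and the inclusion-induced map, send the chosen $j$th meridian to $x_j$, the composite endomorphism of $\nR F^i_{n-1}$ fixes the generators and is therefore the identity. This is a clean ``rigidity'' argument that avoids any hands-on cut-diagram manipulation; its cost is that it relies on Lemma~\ref{lem:isor} (itself proved in \cite{AMY} via cut-diagrams) as a black box, whereas the paper's suggested route would re-derive both facts uniformly. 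Either way the logic is sound, and your handling of the self-singular case in the last paragraph is exactly the right reduction.
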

\noindent This result can be seen as a \lq singular\rq\, version of the \lq combinatorial Stallings Theorem\rq\, of \cite[Cor. 5.21]{AMY}. It can be shown by the same method as in the proof of \cite[Thm. 5.20]{AMY}, involving the theory of cut-diagrams. 

For each singular point $p$ of the $i$th component of $l$, 
pick a loop $\gamma_p$ passing through $p$ as in Section \ref{sec:define}, which may be assumed to lie in  $(\mathbb{R}^3\times\{0\})\setminus l_i$. 
Using Lemma \ref{lem:tek},  the unique positive  element $g_{p}$ of  $RG(L)_i$ representing $\gamma_p$, can be regarded as an element of $RG(l)_i$.
Hence in practice, one can directly compute such an element $g_{p}$, hence the sum $S_i(L)$, from a link diagram of 
$l_i\cup \gamma_p$, by using Milnor's algorithm \cite{Milnor2}  
(although  Milnor's algorithm is given for classical links in \cite{Milnor2}, it extends to self-singular links in a straightforward way, for example using the techniques of \cite{MWY}.)

\subsection{The case of surface-link maps}\label{sec:surfaces}

As seen in Section \ref{sec:cutd}, the combinatorial langage of cut-diagrams, which is the central tool in the main proofs of this paper,  
applies not only to the spherical case involved in the study of link maps, but more generally to surfaces of any topological type. 
As a matter of fact, the constructions of this paper can be generalized to link-homotopy invariants of \emph{surface-link maps}. 
Here, surface-link maps are continuous maps from a disjoint union of surfaces (of any genus) to the $4$-dimensional sphere, with pairwise disjoint images (in the case of surfaces with boundary, we require them to be properly mapped in the $4$-ball $B^4$). 
We shall not give here the details of these definitions, which follow very closely those of Section \ref{sec:define}  but only indicate the main new ingredients and specificities. 

Let $\Sigma=\Sigma_1^2\cup\cdots \cup \Sigma_n^2$ be a compact oriented surface, possibly with boundary. 
Suppose that $f:\Sigma \rightarrow B^4\subset \mathbb{R}^4$ is a surface-link map. 

Let $L=f(\Sigma_1^2\cup\cdots \cup \Sigma_n^2)$, and let $L_i=L\setminus f(\Sigma_i^2)$ for some $i$.
Then, as in the spherical case, the fundamental group $G(L_i)$ of the complement of $L_i$, is normally generated by a choice of meridian $x_j$ for each component ($j\neq i$). A presentation of the reduced group $\nR G(L_i)$ is given in \cite[Thm.~5.18]{AMY}: roughly speaking, given any set of loops $\{l_{jk}\}_k$ representing a basis for $H_1(\Sigma_j^2)$, $\nR G(L_i)$ is obtained from the reduced free group $\nR F^i_{n-1}$ on the generators $x_j$ ($j\neq i$), by adding the commuting relations $[x_j,\lambda_{jk}]$, where $\lambda_{jk}\in \nR F^i_{n-1}$ represents $l_{j,k}$. 

As a matter of fact, these extra commuting relations significantly complicate the construction given in Section \ref{sec:def} for link maps. 
On one hand, we can no longer make sense of a positive element in  $\nR G(L_i)$ as in Definition \ref{def:positive}. 
On the other hand, these relations introduce further indeterminacies in defining the element $g_p\in \nR G(L_i)$ associated with each singularity $p$ (see Notation \ref{nota:gp}), since the associated loop based at $p$ may be embedded in various ways in the surface. 
These difficulties can however be overcome, for instance as follows. 
Recall that higher-dimensional analogues of Milnor invariants were defined in \cite{AMY} for surface-links and images of surface-link maps. 
Suppose that the surface-link map $f$ has vanishing \emph{Milnor loop-invariants} $\nu_L(I)$, for all non-repeating sequences $I$ of  at most $k$ indices (see \cite[Def.~4.13]{AMY}).
This vanishing assumption ensures that the reduced Magnus expansion $E_i$ of a commutator $[x_j,\lambda_{jk}]$ as above, contains no nontrivial term of degree $<k$.
Then, the exact same construction as in Section \ref{sec:kappa} produces well-defined integers $\kappa_L(J;i)$ for any non-repeated sequence $J$ of length $<k$, which are link-homotopy invariants of $f$.

\bibliographystyle{abbrv}
\bibliography{References}
      
\end{document}